\documentclass[11pt,reqno,oneside,a4paper]{article}
\usepackage[a4paper,includeheadfoot,left=25mm,right=25mm,top=00mm,bottom=20mm,headheight=20mm]{geometry}

\usepackage{amssymb,amsmath,amsthm}
\usepackage{xcolor,graphicx}
\usepackage{verbatim}
\usepackage{mathtools}
\usepackage[colorlinks=true, pdfstartview=FitV, linkcolor=blue, citecolor=blue, urlcolor=blue]{hyperref}
\usepackage{titling}
\usepackage{fancyhdr}
\newcommand{\runningtitle}{Running Title}
\newcommand{\runningauthor}{Running Author}
\newtheorem{thm}{Theorem}
\newtheorem{lem}[thm]{Lemma}

\newtheorem{prop}[thm]{Proposition}

\theoremstyle{definition}

\theoremstyle{remark}
\newtheorem{rmk}[thm]{Remark}

\usepackage{scalerel}
\usepackage{stackengine}
\stackMath
\newcommand\reallywidecheck[1]{%
\savestack{\tmpbox}{\stretchto{%
  \scaleto{%
    \scalerel*[\widthof{\ensuremath{#1}}]{\kern-.6pt\bigwedge\kern-.6pt}%
    {\rule[-\textheight/2]{1ex}{\textheight}}
  }{\textheight}%
}{0.5ex}}%
\stackon[1pt]{#1}{\scalebox{-1}{\tmpbox}}%
}

\newcommand\reallywidehat[1]{%
\savestack{\tmpbox}{\stretchto{%
  \scaleto{%
    \scalerel*[\widthof{\ensuremath{#1}}]{\kern-.6pt\bigwedge\kern-.6pt}%
    {\rule[-\textheight/2]{1ex}{\textheight}}
  }{\textheight}%
}{0.5ex}}%
\stackon[1pt]{#1}{\tmpbox}%
}

\NewDocumentEnvironment{MLME}{O{c}m}{%
  \multlined[#1][#2]
}{\endmultlined}
\usepackage{tikz}
\usepackage{pgfplots}

\newcounter{tagtageqn}
\newcommand\tagtagsubeq[2]{{\renewcommand{\thetagtageqn}{#1}\newcommand{\parenteqntag}{#1}\refstepcounter{tagtageqn}#2}}

\newcommand{\NN}{\mathbb N}              
\newcommand{\RR}{\mathbb R}              
\newcommand{\CC}{\mathbb C}              
 \renewcommand{\Im}{\operatorname*{Im}}
\newcommand{\D}{\ensuremath{\,\mathrm{d}}}
\newcommand{\ri}{\ensuremath{\mathrm{i}}}
\newcommand{\re}{\ensuremath{\mathrm{e}}}
\newcommand{\la}{\ensuremath{\lambda}}
\renewcommand{\epsilon}{\varepsilon}
\renewcommand{\geq}{\geqslant}
\renewcommand{\leq}{\leqslant}

\providecommand{\clos}{\operatorname{clos}}
\newcommand{\abs}[1]{\left\lvert#1\right\rvert}

\newcommand{\normp}[2]{\left\lVert#1\right\rVert_{#2}}
\newcommand{\norm}[1]{\left\lVert#1\right\rVert}

\newcommand{\Mspacer}{\;} 
\newcommand{\M}[3]{#1_{#2\Mspacer#3}} 

\providecommand{\bigoh}[1]{\mathcal{O}\left(#1\right)}
\providecommand{\bigohnoscale}[1]{\mathcal{O}(#1)}

\providecommand{\argdot}{{}\cdot{}}

\providecommand{\Lebesgue}{\mathrm{L}} 
\providecommand{\ContinuousSpace}{\mathrm{C}} 

\newcommand{\CSchro}{{\mathcal C_{\mathrm S}}}
\newcommand{\CAiry}{{\mathcal C_3}}
\newcommand{\DAiry}{{\mathcal D_3}}
\newcommand{\CHeat}{{\mathcal C_{\mathrm h}}}
\newcommand{\DHeat}{{\mathcal D_{\mathrm h}}}
\newcommand{\CbiSone}{{\M{\mathcal C}41}}
\newcommand{\CbiStwo}{{\M{\mathcal C}42}}

\newcommand{\what}{\widehat}
\newcommand{\wt}{\widetilde}
\newcommand{\nn}{\nonumber}

\allowdisplaybreaks

\author{%
  Dionyssios Mantzavinos\textsuperscript{*},
  Ravindra Pethiyagoda\textsuperscript{\S} and
  Dave Smith\textsuperscript{\S\textdagger} \\
  \small \textsuperscript{*} University of Kansas, Lawrence KS, USA \\
  \small \textsuperscript{\S} University of Newcastle, Newcastle NSW, Australia \\
  \small \textsuperscript{\textdagger} \texttt{dave.smith@newcastle.edu.au}
}
\title{Interface problems of mixed spatial order}
\renewcommand{\runningauthor}{Mantzavinos, Pethiyagoda \& Smith}
\renewcommand{\runningtitle}{Interface mixed order}
\date{\today}

\begin{document}
\maketitle
\thispagestyle{fancy}

\begin{abstract}
  We solve interface problems on the line between various constant coefficient linear evolution partial differential equations.
  Our prototypical examples are the heat, linear Schr\"odinger, Airy, linearized Korteweg de Vries, and biharmonic Schr\"odinger equations.
  In each problem, one of the listed equations is posed on one spatial half line, and another on the other half line, with appropriate interface conditions.
  These problems are solved by means of novel extensions of Fokas's unified transform method and the explicit solution formulae are evaluated using Filon quadrature.
\end{abstract}


\section{Introduction} \label{sec:Introduction}

In the study of evolution partial differential equations, initial boundary value problems arise naturally in applications where the physical domain has a boundary, such as a half line or a finite interval in one spatial dimension.
Contrary to initial value (Cauchy) problems, which take place on fully infinite or periodic spatial domains (the whole line or the circle in one spatial dimension) and only require the prescription of initial conditions, initial boundary value problems also involve a suitable number of appropriate boundary conditions.
In one spatial dimension, such boundary conditions generally appear in the form of one or more boundary values as given functions of the temporal variable; typical scenarios include those of Dirichlet, Neumann and Robin data, which respectively correspond to the prescription of the solution itself at the boundary, its spatial derivative, and a linear combination thereof.

Due to the presence of a boundary and associated boundary conditions, the analysis of initial boundary value problems is generally more complex than the one of initial value problems, this being the case even at the level of linear equations.
Importantly, the Fourier transform, which is a simple yet powerful method for solving linear evolution equations on the whole space, is no longer available in the case of linear initial boundary value problems posed on the half line, the finite interval, or other domains with a boundary in higher dimensions.
In 1997, Fokas introduced a general method for the solution of linear (and certain nonlinear) initial boundary value problems which is now known as the unified transform method or the Fokas method~\cite{f1997,f2008}.
The unified transform provides the direct analogue of the Fourier transform when the physical domain involves a boundary. Since its inception, it has been developed substantially through a vast number of research works in various directions, as documented in the expository works and conference proceedings volumes~\cite{DTV2014a,fp2015,bountis2022} and the references therein.

A particular direction of interest in connection to the above is the study of interface problems, in which the spatial domain is divided into different regions governed by different modeling equations.
In this context, the classical boundary conditions are replaced by interface conditions, which  reflect the physics of the underlying phenomenon as one transitions from one region to the next one through the respective interface.
For example, in the case of the brain, due to the heterogeneity of the brain tissue (white and grey matter), a simple mathematical model for the growth of certain tumors known as gliomas is one that involves the heat (diffusion) equation but with a discontinuous diffusion coeﬃcient~\cite{sbma2003}.
In this case, the interface conditions can be chosen so that the solution as well as its first partial derivative multiplied by the diffusion coefficient is continuous across each interface.
In~\cite{mpss2016}, this tumor growth interface problem was solved analytically by means of the unified transform in the case of a finite domain consisting of three regions (grey matter---white matter---grey matter).
The main idea behind the analysis of~\cite{mpss2016} was to treat the problem as a chain of initial boundary value problems, despite the fact that the various boundary values involved in the interface conditions were not actually prescribed individually as data.
Then, by combining continuity at the interface with the symmetries of a certain spectral identity that lies in the very core of the unified transform and is known as the global relation, it was possible to eliminate the unknown interface values and thereby obtain an explicit solution in each of the three aforementioned regions.

The unified transform based approach of~\cite{mpss2016} was subsequently employed in~\cite{dps2014} for the solution of more general interface problems associated to heat conduction in one dimensional piecewise homogeneous composite materials.
The technique was then extended to the heat equation on a network with perfect thermal contact~\cite{ss2015}, as well as to dispersive equations: the case of the linear Schr\"odinger and Airy equations \cite{sd2015,dss2016}.
The same method has been adapted to the linear Schr\"odinger equation with a piecewise constant potential~\cite{DS2020a}.
Via a continuum limit of interface problems for piecewise constant coefficient potentials, dissipative equations with smoothly varying coefficients have been solved~\cite{DF2023a,DF2025a}.
Multipoint value problems (a class of interface problems on a rose metric graph) for various dispersive and dissipative evolution equations have been analysed via another adaptation of the unified transform method~\cite{PS2018a}.
Interface problems for any single linear evolution equation with polynomial dispersion relation on any metric graph composed of only finite edges are solved constructively in~\cite{ABS2022a}.
By a similar approach, the linearized KdV equation has been solved on various metric graph domains having semiinfinite edges~\cite{Smi2026a}.
Many of the above mentioned works admit rather general interface conditions, but they all require that the equations on each spatial interval are the same, or at least the spatially leading order terms of each of those equations differ only by a real (or, in the cases of~\cite{ABS2022a,DF2025a}, complex) scalar.

In the present work, we consider interface problems that emerge from the interaction between linear evolution equations of \textit{fundamentally different behavior}.
Indeed, we consider purely dispersive models but of different orders, as well as the interaction of a dissipative and a dispersive model.
More specifically, each of the interface problems we study comprises one of the left half line problems: the linear Schr\"odinger equation
\begin{equation} \label{eqn:LS} \tag{lS}
\begin{aligned}
  \ri w_t + w_{xx} &= 0 & \qquad (x,t) &\in (-\infty,0)\times(0,T), \\
  w(x,0) &= w_0(x) & x &\in(-\infty,0),
\end{aligned}
\end{equation}
the heat equation
\begin{equation} \label{eqn:h} \tag{h}
\begin{aligned}
  w_t - w_{xx} &= 0 & \qquad (x,t) &\in (-\infty,0)\times(0,T), \\
  w(x,0) &= w_0(x) & x &\in(-\infty,0),
\end{aligned}
\end{equation}
or the biharmonic Schr\"odinger equation
\begin{equation} \label{eqn:biS} \tag{biS}
\begin{aligned}
  \ri w_t + w_{xxxx} &= 0 & \qquad (x,t) &\in (-\infty,0)\times(0,T), \\
  w(x,0) &= w_0(x) & x &\in(-\infty,0),
\end{aligned}
\end{equation}
and one of the right half line problems:
the Airy equation
\begin{equation} \label{eqn:Airy} \tag{Airy}
\begin{aligned}
  v_t + v_{xxx} &= 0 & \qquad (x,t) &\in (0,\infty)\times(0,T), \\
  v(x,0) &= v_0(x) & x &\in(0,\infty),
\end{aligned}
\end{equation}
or, for any constant $a\in\RR$, the linearized Korteweg de Vries equation
\begin{equation} \label{eqn:lKdV} \tag{lKdV}
\begin{aligned}
  v_t + v_{xxx} + av_x &= 0 & \qquad (x,t) &\in (0,\infty)\times(0,T), \\
  v(x,0) &= v_0(x) & x &\in(0,\infty),
\end{aligned}
\end{equation}
together with one of the sets of interface conditions:
\tagtagsubeq{IfC}{\label{eqn:IfC}%
\begin{align}
  \label{eqn:IfC.1} \tag{\parenteqntag.1}
  \partial_x^j w(0,t) &= \partial_x^j v(0,t) \eqcolon g_j(t) & t &\in [0,T], \qquad j\in\{0,1\}, \\*
  \label{eqn:IfC.2} \tag{\parenteqntag.2}
  \partial_x^j w(0,t) &= \partial_x^j v(0,t) \eqcolon g_j(t) & t &\in [0,T], \qquad j\in\{0,1,2\}.
\end{align}%
}%
In either of~\eqref{eqn:IfC} the $g_j$ will not be specified data but are interface value functions defined for notational convenience.
Therefore,~\eqref{eqn:IfC.1} specifies that the piecewise function $(w,v)$ is $\ContinuousSpace^1$ across the interface but does not specify its value or derivative there, while~\eqref{eqn:IfC.2} enforces $\ContinuousSpace^2$ across the interface without fixing the value or its derivatives.
In order to state the solutions of such problems, we shall require some further notation.

We define the half line Fourier transforms of functions $f\in \Lebesgue^2(0,\infty)$ and $g\in \Lebesgue^2(-\infty,0)$ by
\begin{align} \label{hlft-def}
  \what f(k) &= \int_0^\infty \re^{-\ri kx} f(x) \D x, & \Im(k) &\leq 0, & & &
  \what g(k) &= \int_{-\infty}^0 \re^{-\ri kx} g(x) \D x, & \Im(k) &\geq 0.
\end{align}
We overload notation here having in mind that the negative or positive half line Fourier transform is evident from the domain of definition of the corresponding function.

Using the principal branches of the root functions
$z^{\frac 1n} = \sqrt{\abs z} \re^{\ri \arg(z)/n}$, for $-\pi < \arg(z) \leq \pi$, for $n\in\NN$,
we define the root type functions
\begin{align}
  \label{eqn:defn.sigma}
  \sigma(\la) &= \re^{ \ri\frac\pi4}\left(\ri\la\right)^{\frac 12} \\
  \label{eqn:defn.rho}
  \rho  (\la) &= \re^{-\ri\frac\pi6}\left(\ri\la\right)^{\frac 13} \\
  \label{eqn:defn.tau}
  \tau  (\la) &= \ri                \left(\ri\la\right)^{\frac 12} \\
  \label{eqn:defn.gamma}
  \gamma(\la) &= \re^{-\ri\frac\pi8}\left(\ri\la\right)^{\frac 14}.
\end{align}
These functions are displayed as maps of quadrants of the complex $\la$ plane in figure~\ref{fig:sigma} for $\sigma$, figure~\ref{fig:rho} for $\rho$, figure~\ref{fig:tau} for $\tau$, and figure~\ref{fig:gamma} for $\gamma$.
For problems involving~\eqref{eqn:lKdV}, let $R>0$ be sufficently large that $\nu$ satisfying
\[
  \nu(k)^3-a\nu(k)=k^3
  \quad\mbox{and}\quad
  \frac{\nu(k)}{k} \xrightarrow[k\to\infty]{} 1
\]
is a biholomorphism from $\CC\setminus B(0,R)$ to the exterior of a suitable compact region.
We define the complex contours, all with positive orientation, as displayed in figure~\ref{fig:contours}
\begin{figure}
  \centering
  \includegraphics[scale=0.7]{./gfx/fig-contours.mps}
  \caption{Integration contours.}
  \label{fig:contours}
\end{figure}
\begin{align*}
  \CSchro  &= \partial \left( \mbox{first quadrant} \right), &
  \CHeat   &= \partial \left\{ k \in \CC: \frac \pi 4 < \arg(k) < \frac{3\pi}{4} \right\}, \\
  \CAiry   &= \partial \left\{ k \in \mathbb C: \frac \pi 3 < \arg(k) < \frac{2\pi}{3} \right\}, \\
  \CbiSone &= \partial \left\{k\in\mathbb C: \frac{\pi}{4} < \arg(k) < \frac{\pi}{2} \right\}, &
  \CbiStwo &= \partial \left\{k\in\mathbb C: \frac{3\pi}{4} < \arg(k) < \pi \right\}, \\
  \M{\mathcal C}3R &= \partial \left\{ k \in \mathbb C: \frac \pi 3 < \arg(k) < \frac{2\pi}{3}, \; \abs k > R \right\}, &
  \wt\RR   &= \partial \left( \CC^+ \cup B(0,\mathcal R) \right),
\end{align*}
for arbitrary positive $\mathcal R$ (or, for problems involving~\eqref{eqn:lKdV}, $\mathcal R>R$).
Finally, we denote the primitive cube root of unity by $\alpha = \re^{\ri\frac{2\pi}{3}}$.

\begin{prop} \label{prop:lS-Airy}
  The solution of interface problem~\eqref{eqn:LS},~\eqref{eqn:Airy},~\eqref{eqn:IfC.1} is
  \begin{multline*}
    w(x, t)
    =
    \frac{1}{2\pi} \int_{\RR} \re^{-\ri kx-\ri k^2 t} \, \what w_0(-k) \D k
    -
    \frac{1}{2\pi} \int_{\CSchro} \re^{-\ri kx-\ri k^2 t} \, \what w_0(k) \D k
    \\
    +
    \frac{1}{2\pi} \int_{\wt{\RR}} \re^{-\ri\sigma(k)x+\ri k t}
    \,
    \frac{\left(1-\alpha^2\right)\rho(k) \, \what w_0(\sigma(k)) - \what v_0(\rho(k)) + \what v_0(\alpha^2 \rho(k))}{\left(\alpha-1\right)\rho(k) \left[\rho(k) + \alpha^2 \sigma(k)\right]}
    \D k
  \end{multline*}
  and
  \begin{multline*}
    v(x, t)
    =
    \frac{1}{2\pi} \int_{\RR} \re^{\ri kx+\ri k^3 t} \, \what v_0(k) \D k
    +
    \frac{1}{2\pi} \int_{\CAiry} \re^{\ri kx+\ri k^3 t} \left[\alpha \, \what v_0(\alpha k) + \alpha^2 \, \what v_0(\alpha^2 k)\right] \D k
    \\
    +
    \frac{1}{2\pi} \int_{\wt{\RR}} \re^{\ri\alpha \rho(k) x+\ri k t}
    \,
    \frac{\left(1-\alpha^2\right)\rho(k) \, \what w_0(\sigma(k)) - \what v_0(\rho(k)) + \what v_0(\alpha^2 \rho(k))}{\left(\alpha-1\right)\rho(k) \left[\rho(k) + \alpha^2 \sigma(k)\right]}
    \D k.
  \end{multline*}
\end{prop}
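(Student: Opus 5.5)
We follow the unified transform approach: a global relation on each half line, a change of spectral variable that makes the time exponentials agree, elimination of the unknown interface transforms, and finally contour deformation. Throughout we assume a solution exists with enough smoothness and decay for all half line transforms and integrations by parts to be valid, as is standard in such propositions.

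\textbf{Global relations.} For \eqref{eqn:LS}, integrate $\re^{-\ri kx+\ri k^2 t}w$ over $(-\infty,0)\times(0,t)$. Writing $\what w(k,t)$ for the transform of $w(\argdot,t)$, this gives
\[
\re^{\ri k^2 t}\what w(k,t)=\what w_0(k)+\ri\left[\wt g_1(k^2,t)+\ri k\,\wt g_0(k^2,t)\right],\qquad \Im k\geq 0,
\]
where $\wt g_j(\la,t)=\int_0^t\re^{\ri\la s}g_j(s)\D s$ (signs to be checked). Similarly, for \eqref{eqn:Airy},
\[
\re^{-\ri k^3 t}\what v(k,t)=\what v_0(k)+\bigl[\wt g_2(-k^3,t)+\ri k\,\wt g_1(-k^3,t)-k^2\wt g_0(-k^3,t)\bigr],\qquad \Im k\leq 0.
\]
The Airy relation contains $g_2$, which \eqref{eqn:IfC.1} does not link to $w$. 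The Ehrenpreis form of each solution is obtained by inverting the transform: $w$ is an integral over $\RR$ containing $\what w_0(-k)$ and the $\wt g_j$ terms, and $v$ likewise.

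\textbf{Common time variable.} For $w$ the boundary terms carry $\re^{-\ri k^2 t}$, and for $v$ they carry $\re^{\ri k^3 t}$. We set $\la=-k^2$ on the $w$ side, i.e.\ $k=\sigma(\la)$ up to the branch fixed in \eqref{eqn:defn.sigma}, and $\la=k^3$ on the $v$ side, i.e.\ $k=\rho(\la)$, $\alpha\rho(\la)$ or $\alpha^2\rho(\la)$. We then deform the boundary contributions off the real line onto the regions $D^\pm$ where the exponentials decay, which after the change of variables become integrals over $\wt\RR$ in $\la$ with kernels $\re^{-\ri\sigma(\la)x+\ri\la t}$ and $\re^{\ri\alpha\rho(\la)x+\ri\la t}$. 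This matches the stated formula, in which only $\alpha\rho$ appears in the $v$ exponential.

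\textbf{Solving for the interface values.} In the variable $\la$ we have three unknowns $\wt g_0,\wt g_1,\wt g_2$ evaluated at $\la$, and three global relations valid in the upper half $\la$ region:
\begin{itemize}
\item the Schr\"odinger relation at $k=\sigma(\la)$;
\item the Airy relation at $k=\rho(\la)$;
\item the Airy relation at $k=\alpha^2\rho(\la)$.
\end{itemize}
These are exactly the points lying in the half planes where the respective transforms are defined, which is what the pictured quadrant maps of $\sigma$ and $\rho$ are for. Cramer's rule gives each $\wt g_j$ in terms of $\what w_0(\sigma)$, $\what v_0(\rho)$, $\what v_0(\alpha^2\rho)$, plus unknown terms $\re^{\ri\la t}\what w(\sigma,t)$ and $\re^{\ri\la t}\what v(\cdot,t)$. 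The determinant should factor as a multiple of $(\alpha-1)\rho[\rho+\alpha^2\sigma]$, which yields the common denominator. Substituting into the combinations $\wt g_1+\ri\sigma\wt g_0$ and the analogous Airy combination at $\alpha\rho$ should collapse to the single displayed fraction; the $\what v_0(\rho)$ terms and the $\RR$, $\CAiry$ pieces come from rearranging the Airy Ehrenpreis form, as in the usual half line Airy solution.

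\textbf{Removing the unknown $t$-dependent terms.} These terms are $\re^{\ri\la(t-\tau)}$ times transforms decaying in the appropriate region; the exponents to check are $-\ri\sigma x$ combined with $\what w(\sigma,t)$, and $\ri\alpha\rho x$ combined with $\what v(\rho,t)$. By Jordan's lemma their integrals over $\wt\RR$ vanish, provided the denominator $\rho+\alpha^2\sigma$ has no zeros outside $B(0,\mathcal R)$ in the closed upper half plane. That zero set is where $\wt\RR$ is indented via $B(0,\mathcal R)$, and it must be checked, since $\sigma\sim\la^{1/2}$ dominates $\rho\sim\la^{1/3}$ for large $\la$.

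\textbf{Main obstacle.} The hardest step will be the branch and region bookkeeping. One must verify that $\sigma(\la)$, $\rho(\la)$ and $\alpha^2\rho(\la)$ land in the correct half planes for all $\la$ in the upper half plane, so the needed global relations hold. One must also show that the leftover integrands decay on the closed upper half plane so the deformation is justified. I would settle this first, using the figures of $\sigma$ and $\rho$.
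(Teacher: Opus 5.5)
Your skeleton matches the paper's proof: relations at $\sigma(\la)$, $\rho(\la)$ and $\alpha^2\rho(\la)$, the determinant $(\alpha-1)\rho[\rho+\alpha^2\sigma]$, the kernels $\re^{-\ri\sigma(\la)x+\ri\la t}$ and $\re^{\ri\alpha\rho(\la)x+\ri\la t}$, and then discarding the $\what w(\argdot,t)$, $\what v(\argdot,t)$ terms. The paper uses $-\sigma(\la)$ for the reflection $u(x,t)=w(-x,t)$, which is the same thing since $\what w_0(k)=\what u_0(-k)$. Starting from Ehrenpreis forms rather than the quoted half line representations (which contain only $\wt g_0$) is cosmetic, because the $\sigma$ row and the two Airy rows carry out exactly that elimination.

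\textbf{The half plane is wrong, and the vanishing step fails as written.} The three relations hold simultaneously on the closed \emph{lower} half plane: $\Im\la\leq0$ gives $\Im\sigma(\la)\geq0$, $\Im\rho(\la)\leq0$, $\Im\alpha^2\rho(\la)\leq0$, and also $\Im\alpha\rho(\la)\geq0$. In the upper half plane they break down:
\begin{itemize}
\item for $0<\arg\la<\pi/2$ one has $0<\arg\rho(\la)<\pi/6$, so $\what v(\rho(\la),t)$ is not defined;
\item for $\pi/2<\arg\la<\pi$ one has $\Im\sigma(\la)<0$, so $\what w(\sigma(\la),t)$ is undefined and $\re^{-\ri\sigma(\la)x}$ grows for $x<0$.
\end{itemize}
The unknown terms must therefore be pushed onto lower semicircles $\mathcal C_R^-=\{R\re^{\ri\theta}:-\pi<\theta<0\}$. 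This works only because the time factors cancel. The system produces $\re^{-\ri\la t}\what w(\sigma(\la),t)$, not $\re^{\ri\la t}\what w$ as you wrote, so the kernel's $\re^{\ri\la t}$, which alone would grow in $\CC^-$, drops out.

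Correspondingly, $\wt\RR$ is indented \emph{into} $\CC^-$ around the zero $\la=0$ of $\Delta$. Since $\mathcal R>0$ is arbitrary, you need $\Delta\neq0$ on all of $\clos(\CC^-)\setminus\{0\}$, not just for large $\abs\la$. The paper's argument: $\rho=-\alpha^2\sigma$ forces $\rho^6=\sigma^6$, i.e.\ $\la^2=-\la^3$. The remaining candidate $\la=-1$ lies on the contour and is excluded by $\Delta(-1)=3$.

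\textbf{Jordan's lemma does not apply verbatim.} The kernels are $\re^{-\ri\sigma(\la)x}$ and $\re^{\ri\alpha\rho(\la)x}$, not $\re^{\ri\la x}$. Also, each formula contains all three unknown terms, not only the two pairings you list.

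In the $w$ formula the kernel gives only Jordan-type decay, $\int_{-\pi}^0\abs{\re^{-\ri\sigma(R\re^{\ri\theta})x}}\D\theta=\bigoh{R^{-1/2}}$.
\begin{itemize}
\item For the $\what w$ term you must first integrate by parts, giving $\abs{\what w(\sigma(\la),t)}\leq\abs{\sigma(\la)}^{-1}(\norm{w(t)}_{\Lebesgue^\infty}+\norm{w_x(t)}_{\Lebesgue^1})$. Together with $\abs{\rho+\alpha^2\sigma}\geq R^{1/2}-R^{1/3}$, this makes the amplitude $\bigoh{R^{-1}}$ on an arc of length $\pi R$.
\item For the $\what v$ terms, $\abs{\what v(\alpha^j\rho(\la),t)}\leq\norm{v(t)}_{\Lebesgue^1}$ for $j=0,2$. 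With $\abs\Delta^{-1}=\bigoh{R^{-5/6}}$ this leaves a growing factor $R^{1/6}$, which only the $R^{-1/2}$ Jordan factor overcomes.
\end{itemize}
The $v$ formula is easy, since $\Im(\alpha\rho(R\re^{\ri\theta}))\geq\frac{\sqrt3}{2}R^{1/3}$ on $\mathcal C_R^-$.

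With the half plane corrected and these bounds supplied, your outline becomes the paper's proof.
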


\begin{prop} \label{prop:lS-lKdV}
  The solution of interface problem~\eqref{eqn:LS},~\eqref{eqn:lKdV},~\eqref{eqn:IfC.1} is
  \begin{multline*}
    2\pi w(x, t)
    =
    \int_{\RR} \re^{-\ri kx-\ri k^2 t} \, \what w_0(-k) \D k
    -
    \int_{\CSchro} \re^{-\ri kx-\ri k^2 t} \, \what w_0(k) \D k
    \\
    +
    \int_{\wt{\RR}} \re^{-\ri\sigma(\lambda)x+\ri\lambda t}
    \,
    \frac{\left( \nu(\rho(\lambda))-\nu(\alpha^2\rho(\lambda)) \right) \what w_0(\sigma(\lambda)) - \what v_0(\nu(\rho(\lambda))) + \what v_0(\nu(\alpha^2\rho(\lambda)))}{\Delta(\lambda)}
    \D \lambda,
  \end{multline*}
  and
  \begin{multline*}
    2\pi v(x, t)
    =
    \int_{\mathbb R} \re^{\ri kx+\ri (k^3-ak) t} \, \what v_0(k) \D k
    \\
    +
    \int_{\M{\mathcal C}3R} \re^{\ri \nu(k)x+\ri k^3 t} \frac{(\nu(\alpha k)-\nu(k))\what v_0(\nu(\alpha^2k)) - (\nu(\alpha^2k)-\nu(k))\what v_0(\nu(\alpha k))}{\nu(\alpha^2k)-\nu(\alpha k)} \nu'(k) \D k
    \\
    +
    \int_{\wt\RR} \re^{\ri\nu(\alpha \rho(\lambda)) x+\ri\lambda t} N(\la)
    \,
    \frac{\left( \nu(\rho(\lambda))-\nu(\alpha^2\rho(\lambda)) \right) \what w_0(\sigma(\lambda)) - \what v_0(\nu(\rho(\lambda))) + \what v_0(\nu(\alpha^2\rho(\lambda)))}{\Delta(\lambda)}
    \D\lambda.
  \end{multline*}
\end{prop}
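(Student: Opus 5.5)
We follow the same route as for Proposition~\ref{prop:lS-Airy}: the unified transform, with the dispersion relation of the right half line now $\omega(k)=k^3-ak$ in place of $k^3$, and the map $\nu$ playing the role that the identity played there.

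\textbf{Global relations.} For $w$ on $(-\infty,0)$, integrating $(\re^{-\ri kx+\ri k^2 t}w)_t$ by parts gives
\[
\re^{\ri k^2 t}\what w(k,t)=\what w_0(k)+\int_0^t \re^{\ri k^2 s}\bigl(g_1(s)-\ri k g_0(s)\bigr)\,\ri\D s,
\qquad \Im k\geq 0 .
\]
For $v$ on $(0,\infty)$ the analogous relation holds with $\Im k\leq0$ and weight $\re^{\ri(k^3-ak)t}$. It involves $g_0,g_1,g_2$ through the polynomial $k^2g_0-\ri kg_1-g_2+ag_0$, up to constants.

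\textbf{Ehrenpreis forms.} Inverting the Fourier transforms and deforming contours gives the two representations. For $w$, the boundary term is integrated over $\CSchro$ (more precisely over the boundary of the region in $\CC^+$ where $\Re(\ri k^2)<0$). For $v$, it is integrated over the boundary of $\{\Im k>0,\ \Re(\ri(k^3-ak))<0\}$.

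The latter region is not a union of sectors. For $\abs k>R$ it is the image under $\nu^{-1}$ of the sector $\pi/3<\arg<2\pi/3$. This is why we reparametrize by $k=\nu(\kappa)$, so that $\nu(\kappa)^3-a\nu(\kappa)=\kappa^3$ and the boundary integral runs over $\M{\mathcal C}3R$. The compact remainder inside $B(0,R)$ contributes nothing, because the integrand there is entire in $k$ and the exponential is bounded, so it can be deformed into the real-line part.

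\textbf{Interface elimination.} In the $v$ relation we use the symmetries $\kappa\mapsto\alpha\kappa,\alpha^2\kappa$, i.e.\ $k\mapsto\nu(\alpha\kappa),\nu(\alpha^2\kappa)$, which preserve $\omega$. Using the two relations valid in the lower half plane, we solve for the spectral transforms of $g_0,g_1,g_2$ at time-frequency $\kappa^3$.

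On the Schrödinger side, both relations must carry the same time exponential. We set the common variable $\la$ so that $k^2=-\la$ on the left and $\omega=\la$ on the right, i.e.\ $k=\sigma(\la)$ and $\kappa=\rho(\la)$ up to the chosen branches.

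This gives a linear system in the three unknowns $\tilde g_j(\la)=\int_0^t\re^{\ri\la s}g_j(s)\D s$ (with $g_2$ appearing only on the Airy side). It consists of one Schrödinger relation at $\sigma(\la)$ and two lKdV relations at $\nu(\rho)$ and $\nu(\alpha^2\rho)$, the latter two being the ones lying in the correct half plane. Cramer's rule then yields $\tilde g_0,\tilde g_1$ with determinant $\Delta(\la)$, which is a combination of $\nu(\rho)-\nu(\alpha^2\rho)$ and $\sigma$-terms. It is here that the numerators $(\nu(\rho)-\nu(\alpha^2\rho))\what w_0(\sigma)-\what v_0(\nu(\rho))+\what v_0(\nu(\alpha^2\rho))$ arise. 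The factor $N(\la)$ in the $v$ formula records the boundary polynomial evaluated at $\nu(\alpha\rho(\la))$, together with the Jacobian from changing variables from $\kappa$ to $\la$.

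Some terms in the solution still contain $\what w(\cdot,t)$ and $\what v(\cdot,t)$ at time $t$. Their integrands have the form $\re^{\ri\la(t-s)}\times(\text{analytic, decaying})$ in the region enclosed by $\wt\RR$, so by Jordan's lemma they vanish. The $v$-only terms at $\alpha k,\alpha^2k$ on $\M{\mathcal C}3R$ are treated similarly and produce the middle integral.

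\textbf{Main obstacle.} The hardest step is the analytic bookkeeping with $\nu$ and the roots. We must check that $\Delta(\la)$ has no zeros outside $B(0,\mathcal R)$ in the closed upper half plane, which is why $\mathcal R>R$. We must also verify that $\sigma,\rho,\nu\circ\rho$ map the region above $\wt\RR$ into the half planes where the respective transforms are defined and the exponentials decay. This is needed to justify both the change of variables to $\la$ and the deformation onto $\wt\RR$. To do this we would first establish asymptotic sector mappings from $\nu(k)=k+\bigoh{k^{-1}}$, and then enlarge $\mathcal R$ as necessary.
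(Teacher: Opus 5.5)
Your architecture matches the paper's: global relations at $\sigma(\la)$, $\nu(\rho(\la))$, $\nu(\alpha^2\rho(\la))$; the $3\times3$ system solved for $\wt g_0(-\ri\la,t)$; and the substitutions $k=\sigma(\la)$ and $k=\alpha\rho(\la)$, with $N$ arising as boundary polynomial times Jacobian. The gap is in removing the terms that contain $\what w(\argdot,t)$ and $\what v(\argdot,t)$. That is exactly the step the paper says the usual Jordan's lemma does not cover, and as you state it, it fails.

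After substitution, $\re^{\ri\la t}$ cancels against $\re^{-\ri\la t}$, so these integrands contain no factor $\re^{\ri\la(t-s)}$. What remains is, for example, $\re^{-\ri\sigma(\la)x}\,\what w(\sigma(\la),t)\big/\bigl(\nu(\alpha^2\rho(\la))+\nu(\rho(\la))-\sigma(\la)\bigr)$ with $x<0$. Any decay must therefore come from the spatial kernels $\re^{-\ri\sigma(\la)x}$ and $\re^{\ri\nu(\alpha\rho(\la))x}$.

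You also work in the wrong half plane. The three global relations hold simultaneously only for $\Im\la\leq0$, and above $\wt\RR$ the integrands are not even defined:
\begin{itemize}
\item for $\pi/2<\arg\la<\pi$, $\Im\sigma(\la)<0$, so $\what w(\sigma(\la),t)$ diverges and $\re^{-\ri\sigma(\la)x}$ grows;
\item for $0<\arg\la<\pi/2$, $\Im\rho(\la)>0$, so $\what v(\nu(\rho(\la)),t)$ is undefined.
\end{itemize}
Because $k=\sigma(\la)$ reverses orientation, it is the region \emph{below} $\wt\RR$ that maps onto the part of the first quadrant enclosed by $\CSchro$. The analogy with the single-equation argument therefore flips the region. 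For the same reason, you need $\Delta$ zero-free in the closed \emph{lower} half plane outside the bypass. The bypass radius must also be at least $R^3$, not merely $>R$: $\nu(\rho(\la))$ requires $\abs{\rho(\la)}=\abs{\la}^{1/3}>R$, and radius $R^3$ is what makes $k=\alpha\rho(\la)$ carry $\wt\RR$ onto $\M{\mathcal C}3R$.

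The correct argument deforms onto $\mathcal C_R^-=\{R\re^{\ri\theta}:-\pi<\theta<0\}$ and needs genuine estimates there. On $\mathcal C_R^-$, $\abs{\re^{-\ri\sigma(\la)x}}=\re^{-\abs{x}R^{1/2}\cos(\theta/2)}$ is of order one near $\theta=-\pi$. Its $\theta$-integral is therefore only $\bigoh{R^{-1/2}}$, against an arc length of order $R$.

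For the $\what w(\sigma(\la),t)$ term the coefficient is $\bigoh{R^{-1/2}}$, so the crude bound $\abs{\what w}\leq\normp{w(t)}{\Lebesgue^1}$ gives only $\bigoh{1}$. You must integrate by parts,
\[
\what w(\sigma(\la),t)=\frac{\ri}{\sigma(\la)}\bigl(w(0,t)-\int_{-\infty}^0\re^{-\ri\sigma(\la)y}w_y(y,t)\D y\bigr),
\]
using $w\in\Lebesgue^\infty_x$ and $w_x\in\Lebesgue^1_x$; this brings the term down to $\bigoh{R^{-1/2}}$. The $\what v$ terms in the $w$ formula go through using $\abs{\Delta(\la)}\geq c\abs{\la}^{5/6}$. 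In the $v$ formula, $\arg(\alpha\rho(\la))\in[\pi/3,2\pi/3]$ on $\mathcal C_R^-$; together with $\nu(k)=k+\bigoh{k^{-1}}$ and $N=1+\bigoh{\la^{-2/3}}$, this gives uniform exponential decay.

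Alternatively, substitute $\kappa=\sigma(\la)$, so the kernel becomes $\re^{\ri\kappa\abs{x}}$ on a quarter circle in the upper half plane, and apply Jordan's lemma in $\kappa$. You still need the amplitude, which contains $\what w(\kappa,t)$, to tend to zero uniformly, and that is what the integration by parts supplies.
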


It would be a straightforward application of Duhamel's principle to admit forcing terms on the right sides of the PDE of each of~\eqref{eqn:LS} and~\eqref{eqn:lKdV}, leading to formulae that can provide the starting point for the analysis of the interface problem for the nonlinear Schr\"odinger and Korteweg de Vries equations via contraction mapping techniques.

\begin{prop} \label{prop:heat-Airy}
  The solution of interface problem~\eqref{eqn:h},~\eqref{eqn:Airy},~\eqref{eqn:IfC.1} is
  \begin{multline*}
    w(x, t)
    =
    \frac1{2\pi} \int_{\RR} \re^{-\ri kx-k^2 t} \, \what w_0(-k) \D k
    -
    \frac1{2\pi} \int_{\CHeat} \re^{-\ri kx-k^2 t} \, \what w_0(k) \D k
    \\
    +
    \frac1{2\pi} \int_{\wt{\RR}} \re^{-\ri\tau(\lambda)x+\ri\lambda t}
    \,
    \frac{\left(1-\alpha^2\right)\rho(\lambda) \, \what w_0(\tau(\lambda)) + \ri \what v_0(\rho(\lambda)) - \ri \what v_0(\alpha^2 \rho(\lambda))}{\Delta(\lambda)}
    \D \lambda,
  \end{multline*}
  and
  \begin{multline*}
    v(x, t)
    =
    \frac1{2\pi} \int_{\RR} \re^{\ri kx+\ri k^3 t} \, \what v_0(k)\D k
    +
    \frac1{2\pi} \int_{\CAiry} \re^{\ri kx+\ri k^3 t} \left[\alpha \, \what v_0(\alpha k) + \alpha^2 \, \what v_0(\alpha^2 k)\right] \D k
    \\
    +
    \frac1{2\pi} \int_{\wt{\RR}} \re^{\ri\alpha \rho(\lambda) x+\ri\lambda t}
    \,
    \frac{\left(1-\alpha^2\right)\rho(\lambda) \, \what w_0(\tau(\lambda)) + \ri \what v_0(\rho(\lambda)) - \ri \what v_0(\alpha^2 \rho(\lambda))}{\Delta(\lambda)}
    \D\lambda.
  \end{multline*}
\end{prop}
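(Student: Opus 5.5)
The plan is to follow the unified transform approach of Proposition~\ref{prop:lS-Airy}, with the Schr\"odinger half line replaced by the heat half line. On each side I write the local relation and integrate over the space--time domain to get the global relations. For the heat equation on $(-\infty,0)$, set $\what w(k,t)=\int_{-\infty}^0 \re^{-\ri kx}w(x,t)\D x$, valid for $\Im k\geq0$. Then $\partial_t\what w = -k^2\what w + g_1(t) + \ri k g_0(t)$, and so
\[
  \re^{k^2t}\what w(k,t) = \what w_0(k) + \tilde g_1(-k^2) + \ri k\,\tilde g_0(-k^2),
\]
where $\tilde g_j(\la)=\int_0^t \re^{-\ri\la s}g_j(s)\D s$ in the spectral variable $\la=\ri k^2$. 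For Airy on $(0,\infty)$, with $\Im k\leq0$, the analogous relation is
\[
  \re^{-\ri k^3t}\what v(k,t) = \what v_0(k) - \bigl[\tilde g_2 + \ri k\tilde g_1 - k^2\tilde g_0\bigr](k^3).
\]
Here the unknown $g_2=v_{xx}(0,t)$ appears only on the Airy side. Inverting the Fourier transforms gives representations of $w$ and $v$ in terms of integrals of $\tilde g_j$ over $\partial\{\Re(k^2)<0\}\cap\CC^+$ and over $\partial\{\Im(k^3)>0\}\cap\CC^+$. Using Jordan's lemma and the decay of the $t$-dependent remainder, I will deform these contours to $\CHeat$ and $\CAiry$.

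The key step is to put both sides on a common time-spectral variable $\la$, so that both equations produce $\tilde g_j(\la)$ with the same $\la$. On the heat side set $\la=\ri k^2$, i.e.\ $k=\tau(\la)$. On the Airy side set $\la=k^3$, i.e.\ $k=\rho(\la)$ and its rotations $\alpha\rho,\alpha^2\rho$. I will check from figure~\ref{fig:tau} and figure~\ref{fig:rho} that for $\la$ in the appropriate region (near the closed upper half plane, i.e.\ around $\wt\RR$), $\tau(\la)$ lies in $\CC^+$, where the heat global relation holds. I will also check that two of $\rho,\alpha\rho,\alpha^2\rho$, namely $\rho$ and $\alpha^2\rho$, lie in $\CC^-$, where the Airy global relation holds. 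This gives three linear equations in $\tilde g_0,\tilde g_1,\tilde g_2$, up to the unknown terms $\what w(\tau,t)\re^{\ri\la t}$ and $\what v(\rho,t)\re^{-\ri\la t}$. Cramer's rule then yields $\tilde g_j(\la)$ with determinant $\Delta(\la)$, a combination of $\rho(\la)$ and $\tau(\la)$ analogous to $(\alpha-1)\rho(\rho+\alpha^2\sigma)$. The data part of the numerator is $(1-\alpha^2)\rho\,\what w_0(\tau)+\ri\what v_0(\rho)-\ri\what v_0(\alpha^2\rho)$, as stated.

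Next I substitute into the integral representations. The heat side requires $\tilde g_1+\ri k\tilde g_0$ at $k=\tau(\la)$ and the Airy side requires $\tilde g_2+\ri\alpha\rho\,\tilde g_1-\alpha^2\rho^2\tilde g_0$. I change variables $k\mapsto\la$ in each boundary integral, mapping the relevant portions of $\CHeat$ and $\CAiry$ onto a single contour in the $\la$ plane. Combining the two, the boundary contribution becomes one $\wt\RR$ integral with exponentials $\re^{-\ri\tau(\la)x+\ri\la t}$ and $\re^{\ri\alpha\rho(\la)x+\ri\la t}$. The part of $\CHeat$ or $\CAiry$ not absorbed produces the $-\int_{\CHeat}\what w_0(k)$ and $\alpha\what v_0(\alpha k)+\alpha^2\what v_0(\alpha^2 k)$ terms, as in Proposition~\ref{prop:lS-Airy}.

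The main obstacle is removing the contributions of the unknown $t$-dependent transforms $\what w(\tau(\la),t)$ and $\what v(\rho(\la),t)$. I will argue that after multiplication by $\re^{-\ri\tau x+\ri\la t}$ (respectively $\re^{\ri\alpha\rho x+\ri\la t}$), these are analytic and decaying in the region above $\wt\RR$. Because the branch point of $\tau,\rho$ at $\la=0$ is enclosed by $B(0,\mathcal R)$, Cauchy's theorem and Jordan's lemma make their integrals vanish. This requires showing that $\Delta(\la)$ has no zeros outside $B(0,\mathcal R)$ in that region and that $1/\Delta$ decays like $\abs{\la}^{-2/3}$. The exponential decay also needs verifying: $\Re(-\ri\tau x)<0$ for $x<0$ and $\Re(\ri\alpha\rho x)<0$ for $x>0$ on the sectors involved, which follows from the quadrant maps.
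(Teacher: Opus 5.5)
\textbf{The gap: you close the contour in the wrong half plane.}
Your skeleton is the paper's: global relations, the substitutions $k=\tau(\lambda)$ and $k=\rho(\lambda),\alpha^2\rho(\lambda)$, a $3\times3$ system solved by Cramer's rule, substitution, and removal of the $t$-dependent terms. Substituting $\tilde g_1,\tilde g_2$ from the system into the unreduced representations is harmless. The rows of the system are exactly the global relations one would use to eliminate them, and the $-\int_{\CHeat}$ and $\alpha\widehat v_0(\alpha k)+\alpha^2\widehat v_0(\alpha^2k)$ terms come from the data in those rows, not from any unabsorbed piece of contour.

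The problem is where you place the analysis. The two Airy relations hold simultaneously only for $\Im\lambda\leq0$. For $0<\arg\lambda<\frac\pi2$ one has $\Im\rho(\lambda)>0$, and for $\frac\pi2<\arg\lambda<\pi$ one has $\Im(\alpha^2\rho(\lambda))>0$. At such points $\widehat v(k,t)=\int_0^\infty\re^{-\ri ky}v(y,t)\D y$ carries the growing factor $\re^{\Im(k)y}$. So the system is not valid there, and the $t$-dependent integrand is not even bounded.

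There is a second obstruction. The branch cut of $\tau$ and $\rho$ is the whole positive imaginary $\lambda$ axis. The region above $\wt\RR$ is $\CC^+\cup B(0,\mathcal R)$, which contains this entire cut and also the origin. The ball does not remove the branch point from that region; it adds it. So both the analyticity and the decay you invoke ``above $\wt\RR$'' fail, and the Cauchy--Jordan step cannot be carried out there.

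\textbf{The fix, which is what the paper does.} Stay in the closed lower half plane throughout. The system and the resulting formula for $\Delta(\lambda)\tilde g_0$ hold on $\clos\CC^-$. The contour $\wt\RR\subset\clos\CC^-$ dips below the origin precisely so that the region below it, $\CC^-\setminus\clos B(0,\mathcal R)$, avoids both the zero of $\Delta$ at $0$ and the branch cut. In that region:
\begin{itemize}
\item $\Im\tau(\lambda)\geq\abs\lambda^{1/2}/\sqrt2$, $\Im\rho(\lambda)\leq0$, $\Im(\alpha\rho(\lambda))\geq0$ and $\Im(\alpha^2\rho(\lambda))\leq0$;
\item hence every $t$-dependent integrand is analytic and the transforms $\widehat w(\tau,t)$, $\widehat v(\rho,t)$, $\widehat v(\alpha^2\rho,t)$ are bounded by $L^1$ norms.
\end{itemize}
Now deform onto the lower semicircles $\mathcal C_R^-$ and let $R\to\infty$. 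On $\mathcal C_R^-$ you have $\abs{\re^{-\ri\tau(\lambda)x}}\leq\re^{-\abs{x}R^{1/2}/\sqrt2}$ for $x<0$. You also have $\abs{\re^{\ri\alpha\rho(\lambda)x}}\leq\re^{-\sqrt3\,xR^{1/3}/2}$ for $x>0$, because $\arg(\alpha\rho(\lambda))\in[\frac\pi3,\frac{2\pi}3]$ there. Since $\abs{\Delta(\lambda)}\sim\abs\lambda^{5/6}$ (not $\abs\lambda^{2/3}$), every such contribution tends to zero.

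\textbf{Sign errors to fix.} The Airy global relation is $\re^{-\ri k^3t}\widehat v(k,t)=\widehat v_0(k)+[\tilde g_2+\ri k\tilde g_1-k^2\tilde g_0]$, with a plus sign. With your convention, the heat relation involves $\tilde g_j(\ri k^2)$, not $\tilde g_j(-k^2)$. With the signs as you wrote them, you would not recover the stated numerator.
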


\begin{prop} \label{prop:biS-Airy}
  The solution of interface problem~\eqref{eqn:biS},~\eqref{eqn:Airy},~\eqref{eqn:IfC.2} is
  \begin{align*}
    2\pi w(x, t)
    \hspace{-3em} &\hspace{3em} =
    \int_{\RR} \re^{-\ri kx+\ri k^4t} \, \what w_0(-k) \D k
    -
    \int_{\CbiSone} \re^{-\ri kx+\ri k^4t} \left[\left(1+i\right) \what w_0(\ri k) - \ri \, \what w_0(k) \right] \D k
    \nn\\
    &\quad
    -
    \int_{\CbiStwo} \re^{-\ri kx+\ri k^4t} \left[\left(1-\ri\right) \what w_0(-\ri k) + \ri \, \what w_0(k) \right] \D k
    \nn\\
    &\quad
    -
    \frac{1-\ri}{2}
    \int_{\widetilde{\RR}} \re^{\gamma(\lambda) x+\ri\lambda t}
    \left(\alpha-1\right) \rho(\lambda)
    \left[\frac{\rho(\la)}{\gamma(\la)}-\alpha^2\right]
    \left[\what w_0(-\gamma(\lambda)) - \what w_0(\ri\gamma(\lambda))\right]
    \frac{\D\lambda}{\Delta(\la)}
    \nn\\
    &\quad
    -
    \frac{1-\ri}{2}
    \int_{\widetilde{\RR}} \re^{\gamma(\lambda) x+\ri\lambda t}
    \bigg\{
    \left[
    \left(1+\ri\right) \rho(\lambda) \left(\gamma(\lambda) + \rho(\lambda)\right) - \left(3-\ri\right) \gamma(\lambda)^2
    \right]
    \what v_0(\alpha^2 \rho(\lambda))
    \nn\\*
    &\hspace*{8em}
    -
    \left[
    \alpha \left(1+\ri\right) \rho(\lambda) \left(\alpha \gamma(\lambda)+\rho(\lambda)\right) -\left(3-\ri\right)\gamma(\lambda)^2
    \right]
    \what v_0(\rho(\lambda))
    \bigg\}
    \frac{\D\lambda}{\Delta(\la)}
    \nn\\
    &\quad
    -
    \frac{1+\ri}{2}
    \int_{\widetilde{\RR}} \re^{\ri\gamma(\lambda) x+\ri\lambda t}
    \left(\alpha-1\right) \rho(\lambda)
    \left[\frac{\ri\rho(\la)}{\gamma(\la)}-\alpha^2\right]
    \left[\what w_0(-\gamma(\lambda)) - \what w_0(\ri\gamma(\lambda))\right]
    \frac{\D\lambda}{\Delta(\la)}
    \nn\\
    &\quad
    -
    \frac{1+\ri}{2}
    \int_{\widetilde{\RR}} \re^{\ri\gamma(\lambda) x+\ri\lambda t}
    \bigg\{
    \left[
    \alpha \left(1-\ri\right) \rho(\lambda) \left(\rho(\lambda)-\ri\alpha \gamma(\lambda)\right) + \left(3+\ri\right)\gamma(\lambda)^2
    \right]
    \what v_0(\rho(\lambda))
    \nn\\*
    &\hspace*{8em}
    -
    \left[
    \left(1-\ri\right)\rho(\lambda) \left(\rho(\lambda)-\ri\gamma(\lambda)\right)
    +
    \left(3+\ri\right) \gamma(\lambda)^2
    \right]
    \what v_0(\alpha^2 \rho(\lambda))
    \bigg\}
    \frac{\D\lambda}{\Delta(\la)},
    \label{eqn:biSAiry.v}
  \end{align*}
  and
  \begin{multline*}
    2\pi v(x, t)
    =
    \int_{\RR} \re^{\ri kx+\ri k^3 t} \, \what v_0(k) \D k
    +
    \int_{\CAiry} \re^{\ri kx+\ri k^3 t} \left[\alpha \, \what v_0(\alpha k) + \alpha^2 \, \what v_0(\alpha^2 k)\right] \D k
    \\
    +
    \left(\alpha^2-1\right)
    \int_{\widetilde{\RR}} \re^{\ri\alpha \rho(\lambda) x+\ri\lambda t}
    \frac{\rho(\lambda)}{\Delta(\la)}
    \left[\what w_0(\ri\gamma(\lambda)) - \what w_0(-\gamma(\lambda))\right] \D\lambda
    \\
    +
    \int_{\widetilde{\RR}} \re^{\ri\alpha \rho(\lambda) x+\ri\lambda t}
    \frac{\gamma(\lambda)}{\Delta(\la)}
    \Big\{\left[\alpha^2 \left(1+\ri\right) \rho(\lambda) - 2\gamma(\lambda)\right] \what v_0(\rho(\lambda))
    -
    \left[\left(1+\ri\right) \rho(\lambda) - 2\gamma(\lambda)\right]
    \what v_0(\alpha^2 \rho(\lambda))
    \Big\} \D\lambda.
  \end{multline*}
\end{prop}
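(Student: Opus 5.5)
We follow the unified transform approach for chains of initial boundary value problems, as in~\cite{mpss2016,dss2016}, but with a common time-transform variable $\lambda$ in place of the spatial variable $k$. First we write down the global relations. For \eqref{eqn:biS} on $(-\infty,0)$, with $G_j(\lambda,t)=\int_0^t \re^{-\ri\lambda s} g_j(s)\D s$, integrating $\re^{-\ri kx - \ri k^4 s}$ against the PDE gives
\[
  \re^{-\ri k^4 t}\what w(k,t) = \what w_0(k) + \ri\left[ g_3 \text{ term} - \ri k\, G_2 - k^2 G_1 + \ri k^3 G_0 \right](k^4,t), \qquad \Im k\geq 0,
\]
where the $g_3(t)=\partial_x^3w(0,t)$ term enters through $G_3$. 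For \eqref{eqn:Airy} on $(0,\infty)$ we have
\[
  \re^{-\ri k^3 t}\what v(k,t) = \what v_0(k) + \left[ G_2 + \ri k G_1 - k^2 G_0 \right](-k^3,t), \qquad \Im k\leq 0
\]
(up to sign conventions that will be fixed carefully). We reparametrise both relations by $\lambda$: on the left we set $k^4=-\lambda$, with roots $\gamma(\lambda)$ times powers of $\ri$, and on the right we set $k^3=\lambda$ (adjusting signs so that both time exponentials equal $\re^{\ri\lambda t}$), with roots $\rho(\lambda)$, $\alpha\rho(\lambda)$, $\alpha^2\rho(\lambda)$. The unknowns are then $G_0,G_1,G_2,G_3$ evaluated at a single $\lambda$. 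These are four unknowns shared by both relations, because \eqref{eqn:IfC.2} makes $G_0,G_1,G_2$ common and $G_3$ appears only on the $w$ side.

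Second, we write the Ehrenpreis form of each half line problem by inverting the half line Fourier transforms and deforming contours. For $w$, the boundary term lives on $\partial\{\Im k>0,\ \Re(\ri k^4)<0\}$, which is $\CbiSone\cup\CbiStwo$ in the sectors $\pi/4<\arg k<\pi/2$ and $3\pi/4<\arg k<\pi$. For $v$, the boundary term lives on $\CAiry$. We then change variables in these boundary contour integrals to $\lambda$. On $\CbiSone$ we set $k=\ri\gamma(\lambda)$-type roots, and on $\CbiStwo$ we take the appropriate rotations, giving the exponentials $\re^{\gamma x}$ and $\re^{\ri\gamma x}$. On $\CAiry$ we set $k=\alpha\rho(\lambda)$. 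In each case the image contour in the $\lambda$ plane is $\wt\RR$ (or a contour deformable to it), and the $\lambda$-integrals keep the same form. Where the parts of the global relations that are already known are substituted, the $\what w_0$ and $\what v_0$ terms produce the extra $\CbiSone,\CbiStwo,\CAiry$ integrals in the statement. Those terms come from the symmetries $k\mapsto \ri k,-\ri k$ (quartic) and $k\mapsto\alpha k,\alpha^2 k$ (cubic), exactly as in the single-equation half line formulae.

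Third, we select four global relations that are valid in the relevant $\lambda$ region, namely the upper half plane outside a disc, i.e.\ the region bounded by $\wt\RR$. These are two from $w$, at $k=-\gamma(\lambda)$ and $k=\ri\gamma(\lambda)$ (both in $\overline{\CC^+}$ there), and two from $v$, at $k=\rho(\lambda)$ and $k=\alpha^2\rho(\lambda)$ (both in $\overline{\CC^-}$). This gives a $4\times4$ linear system for $(G_0,G_1,G_2,G_3)(\lambda,t)$ whose right side consists of the transforms $\what w_0,\what v_0$ plus the unknown $\re^{\ri\lambda t}\what w(\cdot,t)$ and $\re^{\ri\lambda t}\what v(\cdot,t)$ terms. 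We solve it by Cramer's rule with determinant $\Delta(\lambda)$, which is a polynomial in $\rho,\gamma$, and substitute only the needed combinations into the two boundary integrals. The main obstacle is this step. It consists of computing $\Delta$ and the cofactor combinations explicitly, matching them to the printed coefficients, and checking that $\Delta(\lambda)$ has no zeros in the closed region bounded by $\wt\RR$. That condition is why the contour is $\wt\RR$ with $\mathcal R$ chosen large, or otherwise we must justify that any zeros are enclosed.

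Finally, we show that the terms containing $\what w(\cdot,t)$ and $\what v(\cdot,t)$ contribute zero. Each such integrand is $\re^{\ri\lambda(\cdot)}$ times an expression analytic in the region beyond $\wt\RR$ and decaying like $O(\abs{\lambda}^{-1})$ uniformly there. For example, $\re^{-\ri\gamma(\lambda)y}\re^{\gamma x}$ is bounded because $x<0<$ the relevant sign, and similarly $\re^{\ri\alpha\rho(\lambda)(x-y)}$-type factors. By Cauchy's theorem and Jordan's lemma these integrals vanish. This leaves exactly the stated formulae for $w$ and $v$.
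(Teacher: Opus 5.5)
Your architecture matches the paper's: a single time-transform variable $\la$, the four relations at $-\gamma(\la)$ and $\ri\gamma(\la)$ for $w$ and at $\rho(\la)$ and $\alpha^2\rho(\la)$ for $v$, Cramer's rule, and removal of the $\what w(\argdot,t)$, $\what v(\argdot,t)$ terms by contour deformation. However, the two steps where the paper does the real work are either misidentified or left open. First, the region is wrong. For $-3\pi/2<\arg\la\leq\pi/2$ one has $\arg\gamma(\la)=\arg(\la)/4$ and $\arg\rho(\la)=\arg(\la)/3$. So the relation at $\rho(\la)$ needs $-3\pi/2<\arg\la\leq0$, while the one at $\alpha^2\rho(\la)$ needs $-\pi\leq\arg\la\leq\pi/2$. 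In the open upper half plane these two are never valid at the same time, and $-\gamma(\la)\notin\overline{\CC^+}$ in the first quadrant. The $4\times4$ system holds precisely on $\Im(\la)\leq0$. That is why $\wt\RR$ detours \emph{below} the origin, and why the deformation must go onto $\mathcal C_R^-=\{R\re^{\ri\theta}:-\pi<\theta<0\}$ rather than into $\CC^+$. Likewise, the zero-free region you need for $\Delta$ is $\{\Im(\la)\leq0\}\setminus\{0\}$; the region above $\wt\RR$ contains the zero at $\la=0$. Second, you never determine the zeros of $\Delta$. Since $\wt\RR$ may have any radius $\mathcal R>0$, you need $\Delta(\la)\neq0$ for $\Im(\la)\leq0$, $\la\neq0$. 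The paper proves this by writing the nontrivial factor of $\Delta$ as $\re^{7\pi\ri/12}(\ri\la)^{1/2}(z^2-\sqrt2z+1)$ with $z=\re^{-\ri\pi/8}(\ri\la)^{1/12}$. The roots $z=\re^{\pm\ri\pi/4}$ could only come from $\la=1$, and $\Delta(1)=(1-\sqrt3)(\alpha-1)\neq0$. Your fallback of taking $\mathcal R$ large would need at least a lower bound $\abs{\Delta(\la)}\geq c\abs\la^{5/4}$ for large $\abs\la$, and would prove the formula only for large $\mathcal R$.

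Third, the vanishing step does not work as you state it. After substitution, $\re^{\ri\la t}$ cancels against $\re^{-\ri\la t}$. The kernels are therefore $\re^{\gamma(\la)x}$ and $\re^{\ri\gamma(\la)x}$ with $x<0$, and $\re^{\ri\alpha\rho(\la)x}$ with $x>0$, not $\re^{\ri\la(\argdot)}$. So Jordan's lemma does not apply, and the remaining factor is not uniformly $\bigohnoscale{\abs\la^{-1}}$. For example, in the $\re^{\ri\gamma(\la)x}$ integral the coefficient of $\what v(\rho(\la),t)$ divided by $\Delta$ has size $\abs\la^{2/3-5/4}=\abs\la^{-7/12}$, and $\abs{\re^{\ri\gamma(\la)x}}$ does not decay at $\arg\la=0$. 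With only $\abs{\what v(\rho(\la),t)}\leq\norm{v(t)}_{\Lebesgue^1}$, the arc integral is bounded by roughly $R\cdot R^{-7/12}\cdot R^{-1/4}=R^{1/6}$, which does not tend to zero. The paper handles this in two steps. It integrates by parts once in each unknown transform, which is legitimate because $\Im(\gamma)\leq0$, $\Im(\ri\gamma)\geq0$, $\Im(\rho)\leq0$ and $\Im(\alpha^2\rho)\leq0$ on $\Im(\la)\leq0$. Assuming the solutions are bounded and their $x$ derivatives lie in $\Lebesgue^1$, this gives $\bigohnoscale{\abs\la^{-1/4}}$ and $\bigohnoscale{\abs\la^{-1/3}}$ decay. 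It then proves Jordan-type estimates adapted to the fractional phases, such as $\int_{-\pi}^0\abs{\re^{\ri\gamma(R\re^{\ri\theta})x}}\D\theta=\bigohnoscale{R^{-1/4}}$, using $\sin\phi\geq2\phi/\pi$. Combined with $\abs{\Delta}\geq c\abs\la^{5/4}$, these make every arc contribution $\bigohnoscale{R^{-1/6}}$ or better. These estimates, together with the zero-set lemma for $\Delta$, are the substance of the proof that your proposal is missing.
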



\subsection*{Layout of paper}
In~\S\ref{sec:lSAiry} we prove proposition~\ref{prop:lS-Airy} for the interface problem between the linear Schr\"odinger and Airy equations, demonstrating the novel adaptations to the unified transform method required for problems of mixed spatial order.
Further extensions are required for a problem in which the partial differential equations have lower order terms; the example of the linear Schr\"odinger and linearized Korteweg de Vries is expounded in~\S\ref{sec:lSlKdV} with a proof of proposition~\ref{prop:lS-lKdV}.
It is not necessary that both evolution equations be dispersive, as shown in~\S\ref{sec:heatAiry}, where the heat and Airy problem of proposition~\ref{prop:heat-Airy} is solved.
To demonstrate that the methods are robust to problems of still higher spatial order, the biharmonic Schr\"odinger and Airy problem of proposition~\ref{prop:biS-Airy} is solved in~\S\ref{sec:bihAiry}.
To conclude, we present in~\S\ref{sec:numerics} numerical evaluations of the formulae in propositions~\ref{prop:lS-Airy}--\ref{prop:biS-Airy}, along with a description of the numerical methods used.

\section{Linear Schr\texorpdfstring{\"o}{o}dinger and Airy} \label{sec:lSAiry}

We will prove proposition~\ref{prop:lS-Airy} by solving the interface problem~\eqref{eqn:LS},~\eqref{eqn:Airy},~\eqref{eqn:IfC.1}, in which $w_0$ and $v_0$ are given initial data, but the interface values $g_j$ are unknown functions so denoted for convenience only.
The decision to include two interface conditions can be regarded as reflecting the fact that, when posed independently on their respective half lines, the linear Schr\"odinger equation and the Airy equation each require one boundary condition at $x=0$.

The half line Fourier transform of a function $f\in \Lebesgue^2(0, \infty)$ is defined by equation~\eqref{hlft-def}, and the inversion is obtained via the usual whole line Fourier transform inversion theorem as
\begin{equation}
  f(x) = \frac{1}{2\pi} \int_{\mathbb R} \re^{\ri kx} \what f(k) \D k, \quad x>0.
\end{equation}
Note that $\what f$ is holomorphic in the lower half of the complex $k$-plane by a standard Paley Wiener type theorem (e.g. see~\cite[theorem~7.2.4]{Str1994a}).

We observe that the function $u(x, t) := w(-x, t)$ satisfies the positive half line problem
\begin{subequations} \label{eqn:LSAiryPositive}
\begin{align}
  \label{eqn:LSAiryPositive.LSPDE} \tag{\theparentequation.PDE}
  \ri u_t + u_{xx} &= 0 & (x,t) &\in (0,\infty)\times(0,T), \\
  \label{eqn:LSAiryPositive.LSInC} \tag{\theparentequation.InC}
  u(x,0) &= u_0(x) := w_0(-x) & x &\in(0,\infty), \\
  \label{eqn:LSAiryPositive.LSIfC} \tag{\theparentequation.IfC}
  (-1)^j\partial_x^j u(0,t) &= \partial_x^j v(0,t)  = g_j(t) & t &\in [0,T], \qquad j\in\{0,1\},
\end{align}
\end{subequations}
where we once again emphasize that the functions $g_0(t)$ and $g_1(t)$ are unknown.
Thus, the original interface problem can be replaced by~\eqref{eqn:Airy} and~\eqref{eqn:LSAiryPositive}.

Under the assumption of sufficiently smooth data that decay fast enough at infinity (e.g. in the Schwartz class of functions), the initial boundary value problem for the linear Schr\"odinger equation on the positive half line admits the unified transform solution representation~\cite{Fok2002a}
\begin{equation} \label{utm-u}
  2\pi u(x, t)
  =
  \int_{\mathbb R} \re^{\ri kx-\ri k^2 t} \, \what u_0(k) \D k
  -
  \int_{\CSchro} \re^{\ri kx-\ri k^2 t} \, \what u_0(-k) \D k
  +
  \int_{\CSchro} \re^{\ri kx-\ri k^2 t} \, 2k \, \wt g_0(\ri k^2, t) \D k
\end{equation}
where the temporal transform
\begin{equation} \label{tilde-def}
  \wt g_j(\xi, t) := \int_0^t \re^{\xi \tau} g_j(\tau) \D\tau
\end{equation}
and the contour of integration $\CSchro$ is the positively oriented boundary of the first quadrant of the complex $k$ plane.

Similarly, the initial boundary value problem for the Airy equation on the positive half line admits the unified transform solution representation~\cite{Fok2002a,Fok2008a}
\begin{multline} \label{utm-v}
  2\pi v(x, t)
  =
  \int_{\mathbb R} \re^{\ri kx+\ri k^3 t} \, \what v_0(k) \D k
  +
  \int_{\CAiry} \re^{\ri kx+\ri k^3 t} \left[\alpha \, \what v_0(\alpha k) + \alpha^2 \, \what v_0(\alpha^2 k)\right] \D k
  \\
  -
  \int_{\CAiry} \re^{\ri kx+\ri k^3 t} \, 3k^2 \, \wt g_0(-\ri k^3, t) \D k
\end{multline}
where $\alpha = \re^{\ri\frac{2\pi}{3}}$ and the contour of integration $\CAiry$ denotes the positively oriented boundary of the region
\begin{equation*}
  \DAiry := \left\{ k \in \mathbb C: \frac \pi 3 < \arg(k) < \frac{2\pi}{3} \right\}.
\end{equation*}
In addition to the representations~\eqref{utm-u} and~\eqref{utm-v}, the following spectral identities, known in the unified transform terminology as global relations, are also valid:
\begin{align}
  \re^{\ri k^2 t} \, \what u(k, t)
  &=
  \what u_0(k) + i \, \wt g_1(\ri k^2, t) + k \, \wt g_0(\ri k^2, t), & \Im(k) &\leq 0,
  \label{gr-ls-0}
  \\
  \re^{-\ri k^3 t} \, \what v(k, t)
  &=
  \what v_0(k) + \wt g_2(-\ri k^3, t) + \ri k \, \wt g_1(-\ri k^3, t) - k^2 \, \wt g_0(-\ri k^3, t), & \Im(k) &\leq 0.
  \label{gr-airy-0}
\end{align}

Making the changes of variable $k = \sigma(\lambda) := \re^{\ri\frac\pi4}\left(\ri\lambda\right)^{\frac 12}$ and $k=-\sigma(\lambda)$ (both of which imply $\lambda = -k^2$ and a branch cut along the positive imaginary axis of the complex $\lambda$-plane) in the global relation \eqref{gr-ls-0}, we obtain the pair of identities
\begin{align}
  \re^{-\ri\lambda t} \, \what u(\sigma(\lambda), t)
  &=
  \what u_0(\sigma(\lambda)) + \ri \, \wt g_1(-\ri\lambda, t) + \sigma(\lambda) \, \wt g_0(-\ri\lambda, t), & \tfrac\pi2<&\arg(\lambda)\leq\pi \mbox{ or } \lambda=0,
  \label{gr-ls-1}
  \\
  \re^{-\ri\lambda t} \, \what u(-\sigma(\lambda), t)
  &=
  \what u_0(-\sigma(\lambda)) + i \, \wt g_1(-\ri\lambda, t) - \sigma(\lambda) \, \wt g_0(-\ri\lambda, t), & -\pi\leq&\arg(\lambda)\leq\tfrac\pi2 \mbox{ or } \lambda=0,
  \label{gr-ls-2}
\end{align}
where, as shown in figure~\ref{fig:sigma}, the regions of validity are due to the fact that $\Im(\sigma(\lambda)) \geq 0$ is equivalent to $\Im(\lambda) \leq 0$.
Similarly, making the changes of variable $k = \rho(\lambda) := \re^{-\ri\frac\pi6}(\ri\lambda)^{\frac 13}$, $k = \alpha \rho(\lambda)$ and $k = \alpha^2 \rho(\lambda)$ (all of which imply $\lambda = k^3$ and a branch cut along the positive imaginary axis of the complex $\lambda$ plane), we obtain from global relation~\eqref{gr-airy-0} the pair of identities
\begin{align}
  \re^{-\ri\lambda t} \, \what v(\rho(\lambda), t)
  &=
  \what v_0(\rho(\lambda)) + \wt g_2(-\ri \lambda, t) + \ri \rho(\lambda) \, \wt g_1(-\ri\lambda, t) - \rho(\lambda)^2 \, \wt g_0(-\ri\lambda, t),
  \label{gr-airy-1}
  \\
\intertext{valid on $\tfrac{-3\pi}2<\arg(\lambda)\leq 0$ and at $\la=0$,}
  \re^{-\ri\lambda t} \, \what v(\alpha^2 \rho(\lambda), t)
  &=
  \what v_0(\alpha^2 \rho(\lambda)) + \wt g_2(-\ri \lambda, t) + \ri \alpha^2 \rho(\lambda) \, \wt g_1(-\ri\lambda, t) - \alpha \rho(\lambda)^2 \, \wt g_0(-\ri\lambda, t),
  \label{gr-airy-2}
\end{align}
valid on $-\pi\leq\arg(\lambda)\leq \tfrac\pi2$ and at $\la=0$,
where we have used the fact that $\alpha^4 = \alpha$ and have determined the relevant regions of validity due to the transformations of the region $\Im(k) \leq 0$ under the aforementioned changes of variable, as shown in figures~\ref{fig:sigma} and~\ref{fig:rho}.

\begin{figure}
  \centering
  \includegraphics[scale=0.7]{./gfx/fig-sigma.mps}
  \caption{The locus of $\pm \sigma(\lambda)$.}
  \label{fig:sigma}
\end{figure}

\begin{figure}
  \centering
  \includegraphics[scale=0.7]{./gfx/fig-rho.mps}
  \caption{The locus of $\alpha^j \rho(\lambda)$, $j=0, 1, 2$.}
  \label{fig:rho}
\end{figure}

\begin{rmk}
  Under the change of variable $k = \alpha \rho(\lambda)$, the global relation~\eqref{gr-airy-0} provides the supplementary identity
  \begin{equation} \label{gr-airy-3}
    \re^{-\ri\lambda t} \, \what v(\alpha \rho(\lambda), t)
    =
    \what v_0(\alpha \rho(\lambda)) + \wt g_2(-\ri \lambda, t) + \ri \alpha \rho(\lambda) \, \wt g_1(-\ri\lambda, t) - \alpha^2 \rho(\lambda)^2 \, \wt g_0(-\ri\lambda, t), \quad \textnormal{Im}\left(\rho(\lambda)\right) \leq 0,
  \end{equation}
  which, as displayed in figure~\ref{fig:rho}, is only valid for $\lambda=0$ and so is not useful to us.
\end{rmk}

Among the four identities \eqref{gr-ls-1}, \eqref{gr-ls-2}, \eqref{gr-airy-1} and \eqref{gr-airy-2}, the latter three are all valid for $\text{Im}(\lambda) \leq 0$.
Therefore, we can combine them into a $3\times3$ linear system for the transforms $\wt g_0(-\ri\lambda, t)$, $\wt g_1(-\ri\lambda, t)$,  $\wt g_2(-\ri\lambda, t)$ of the unknown boundary values of the solution to the Airy equation, namely
\begin{equation} \label{lin-sys}
  \begin{pmatrix}
    \sigma(\lambda) &1 &0
    \\
    \rho(\lambda)^2 &\rho(\lambda) &1
    \\
    \alpha \rho(\lambda)^2 &\alpha^2 \rho(\lambda) &1
  \end{pmatrix}
  \begin{pmatrix}
    -\wt g_0(-\ri\lambda, t)
    \\
    \ri\, \wt g_1(-\ri\lambda, t)
    \\
    \wt g_2(-\ri\lambda, t)
  \end{pmatrix}
  =
  \re^{-\ri\lambda t}
  \begin{pmatrix}
    \what u(-\sigma(\lambda), t) \\
    \what v(\rho(\lambda), t) \\
    \what v(\alpha^2 \rho(\lambda), t)
  \end{pmatrix}
  -
  \begin{pmatrix}
    \what u_0(-\sigma(\lambda)) \\
    \what v_0(\rho(\lambda)) \\
    \what v_0(\alpha^2 \rho(\lambda))
  \end{pmatrix},
  \quad
  \text{Im}(\lambda) \leq 0.
\end{equation}
Solving this system, we obtain, in particular, the following expression for $\wt g_0(-\ri\lambda, t)$:
\begin{equation} \label{g0-sol}
  \begin{aligned}
    \Delta(\lambda) \, \wt g_0(-\ri\lambda, t)
    &=
    \left[\left(1-\alpha^2\right)\rho(\lambda) \, \what u_0(-\sigma(\lambda)) - \what v_0(\rho(\lambda)) + \what v_0(\alpha^2 \rho(\lambda))\right]
    \\
    &\quad
    -\re^{-\ri\lambda t} \left[\left(1-\alpha^2\right)\rho(\lambda) \, \what u(-\sigma(\lambda), t) - \what v(\rho(\lambda), t) + \what v(\alpha^2 \rho(\lambda), t)\right],
  \end{aligned}
  \quad
  \text{Im}(\lambda) \leq 0,
\end{equation}
where $\Delta$ denotes the determinant of the $3\times 3$ matrix on the left side of \eqref{lin-sys};
\begin{equation} \label{eqn:lSAiry.Delta}
  \Delta(\lambda) = \left(\alpha-1\right) \rho(\lambda) \left[\rho(\lambda) + \alpha^2 \sigma(\lambda)\right].
\end{equation}
Note that a necessary condition for $\rho(\lambda) + \alpha^2 \sigma(\lambda)$ to vanish is that $\rho(\lambda)^6 = \left(-\alpha^2 \sigma(\lambda)\right)^6$ i.e. $\lambda^2 = -\lambda^3$ i.e. $\lambda=0$ or $\lambda=-1$.
However, according to definitions~\eqref{eqn:defn.sigma} and \eqref{eqn:defn.rho}, $\Delta(-1) = \left(\alpha-1\right) \re^{-\ri\pi/3} \left( \re^{-\ri\pi/3} + \re^{-\ri2\pi/3} \right) = 3 \neq 0$ so the only actual zero of $\Delta$ is $\lambda=0$, at which value \eqref{g0-sol} still makes sense.

In order to employ \eqref{g0-sol} in the solution representations \eqref{utm-u} and \eqref{utm-v}, we first need to apply a suitable change of variables in the relevant integrals of these representations.
Let us begin with the last term in \eqref{utm-u}.
Both changes of variable $k=\sigma(\lambda)$ and $k=-\sigma(\lambda)$ imply $\lambda=-k^2$ and hence turn $\wt g_0(ik^2, t)$ into $\wt g_0(-i\lambda, t)$, as desired.
Furthermore, if $k\in\CSchro$ then certainly $\lambda =-k^2 \in \mathbb R$.
However, the correct change of variables is the one such that if $\lambda \in \mathbb R$ then $k\in\CSchro$.
Hence, based on definition~\eqref{eqn:defn.sigma} (see also figure~\ref{fig:sigma}), we should let $k = \sigma(\lambda)$ in the last integral of \eqref{utm-u}, as this transformation indeed maps $\lambda \in \mathbb R$ to $k\in\CSchro$ (on the other hand, $k = - \sigma(\lambda)$ maps $\lambda \in \mathbb R$ to the boundary of the third quadrant of the complex $k$ plane).
Then,~\eqref{utm-u} becomes
\begin{equation} \label{utm-u2}
  2\pi u(x, t)
  =
  \int_{\mathbb R} \re^{\ri kx-\ri k^2 t} \, \what u_0(k) \D k
  -
  \int_{\CSchro} \re^{\ri kx-\ri k^2 t} \, \what u_0(-k) \D k
  +
  \int_{\mathbb R} \re^{\ri\sigma(\lambda)x+\ri\lambda t}  \, \wt g_0(-\ri\lambda, t) \D\lambda.
\end{equation}
Similarly, in view of definition~\eqref{eqn:defn.rho} (see also figure~\ref{fig:rho}), the last term in \eqref{utm-v} requires the change of variable $k = \alpha \rho(\lambda)$, which maps $\lambda \in \mathbb R$ to $k\in\CAiry$ (while $k = \rho(\lambda)$ and $k = \alpha^2 \rho(\lambda)$ map $\lambda \in \mathbb R$ to the boundary of the sixth and fourth sextant of the complex $k$ plane, respectively).
Hence, \eqref{utm-v} becomes
\begin{multline} \label{utm-v2}
  2\pi v(x, t)
  =
  \int_{\RR} \re^{\ri kx+\ri k^3 t} \, \what v_0(k) \D k
  +
  \int_{\CAiry} \re^{\ri kx+\ri k^3 t} \left[\alpha \, \what v_0(\alpha k) + \alpha^2 \, \what v_0(\alpha^2 k)\right] \D k
  \\
  +
  \int_{\RR} \re^{\ri\alpha \rho(\lambda) x+\ri\lambda t} \,   \wt g_0(-\ri\lambda, t) \D \lambda.
\end{multline}

By Cauchy's theorem, we deform the path of integration of the integrals in~\eqref{utm-u2} and~\eqref{utm-v2} that involve $\wt g_0(-\ri\lambda, t)$ locally around $\lambda=0$ from $\RR$ to the contour $\wt{\RR}$ that bypasses $\lambda=0$ via a semicircle lying in the lower half of the complex $\lambda$ plane.
Then, using~\eqref{g0-sol}, we substitute for $\wt g_0(-\ri\lambda, t)$ to obtain
\begin{multline} \label{utm-u3}
  2\pi u(x, t)
  =
  \int_{\RR} \re^{\ri kx-\ri k^2 t} \, \what u_0(k) \D k
  -
  \int_{\CSchro} \re^{\ri kx-\ri k^2 t} \, \what u_0(-k) \D k
  \\
  +
  \int_{\wt{\RR}} \re^{\ri\sigma(\lambda)x+\ri\lambda t}
  \,
  \frac{\left(1-\alpha^2\right)\rho(\lambda) \, \what u_0(-\sigma(\lambda)) - \what v_0(\rho(\lambda)) + \what v_0(\alpha^2 \rho(\lambda))}{\Delta(\lambda)}
  \D \lambda
  \\
  -
  \int_{\wt{\RR}} \re^{\ri\sigma(\lambda)x}  \,
  \frac{\left(1-\alpha^2\right)\rho(\lambda) \, \what u(-\sigma(\lambda), t) - \what v(\rho(\lambda), t) + \what v(\alpha^2 \rho(\lambda), t)}{\Delta(\lambda)}
  \D\lambda
\end{multline}
and
\begin{multline} \label{utm-v3}
  2\pi v(x, t)
  =
  \int_{\RR} \re^{\ri kx+\ri k^3 t} \, \what v_0(k)\D k
  +
  \int_{\CAiry} \re^{\ri kx+\ri k^3 t} \left[\alpha \, \what v_0(\alpha k) + \alpha^2 \, \what v_0(\alpha^2 k)\right] \D k
  \\
  +
  \int_{\wt{\RR}} \re^{\ri\alpha \rho(\lambda) x+\ri\lambda t}
  \,
  \frac{\left(1-\alpha^2\right)\rho(\lambda) \, \what u_0(-\sigma(\lambda)) - \what v_0(\rho(\lambda)) + \what v_0(\alpha^2 \rho(\lambda))}{\Delta(\lambda)}
  \D\lambda
  \\
  -
  \int_{\wt{\RR}} \re^{\ri\alpha \rho(\lambda) x}
  \,
  \frac{\left(1-\alpha^2\right)\rho(\lambda) \, \what u(-\sigma(\lambda), t) - \what v(\rho(\lambda), t) + \what v(\alpha^2 \rho(\lambda), t)}{\Delta(\lambda)}\D\lambda.
\end{multline}

In the usual procedure of the unified transform method, at the stage analogous to this, one would employ Jordan's lemma to remove the contributions of the $\what u(\argdot,t),\what v(\argdot,t)$ dependent integrals (that is, the final integrals over $\wt{\RR}$) from each of equations~\eqref{utm-u3} and~\eqref{utm-v3}.
Indeed, we shall arrive at a similar result.
However, as these integrals have the exponential kernels $\re^{\ri\sigma(\lambda)x}$ and $\re^{\ri\alpha\rho(\lambda)x}$, which do not match the usual $\re^{\ri\lambda x}$ appearing in Jordan's lemma even to leading order, we shall make bespoke arguments for each.

\begin{rmk}
  The deformation from $\RR$ to $\wt{\RR}$ is done in order to avoid the singularity at $\lambda=0$ that would have otherwise appeared after dividing \eqref{g0-sol} by $\Delta$.
  In particular, although this singularity ends up being of $\bigohnoscale{\lambda^{-1/3}}$, which is integrable (see remark~\ref{int-sing-r} below), it is more convenient to avoid it anyway in order to easily show via analyticity that the second half of~\eqref{g0-sol} does not contribute in~\eqref{utm-u2} and~\eqref{utm-v2}.
\end{rmk}

Now, from definitions (\ref{eqn:defn.sigma}--\ref{eqn:defn.rho}) and figures~\ref{fig:sigma}--\ref{fig:rho}, notice that if $\Im(\lambda) \leq 0$ then
\begin{equation} \label{im-ineq}
  \Im(\sigma(\lambda)) \geq 0,
  \quad
  \Im(\rho(\lambda)) \leq 0,
  \quad
  \Im(\alpha\rho(\lambda)) \geq 0,
  \quad
  \Im(\alpha^2\rho(\lambda)) \leq 0.
\end{equation}
Therefore, assuming that $u \in \Lebesgue_t^\infty((0,\infty); \Lebesgue_x^\infty(0, \infty))$ and $u_x, v \in \Lebesgue_t^\infty((0,\infty); \Lebesgue_x^1(0, \infty))$, the fourth $\lambda$ integral in each of the expressions~\eqref{utm-u3} and~\eqref{utm-v3} vanishes.
Specifically, in the case of the linear Schr\"odinger representation~\eqref{utm-u3}, thanks to analyticity in the region lying below $\wt{\RR}$ (and relying upon the aforementioned deformation below $\lambda=0$),
\begin{equation} \label{u-deform}
  \int_{\wt{\RR}} \re^{\ri\sigma(\lambda)x}  \,
  \frac{\rho(\lambda) \, \what u(-\sigma(\lambda), t)}{\Delta(\lambda)}
  \, \D\lambda
  =
  \lim_{R\to\infty} \int_{\mathcal C_R^-} \re^{\ri\sigma(\lambda)x}  \,
  \frac{\rho(\lambda) \, \what u(-\sigma(\lambda), t)}{\Delta(\lambda)}
  \, \D\lambda,
\end{equation}
where
\begin{equation} \label{eqn:lSAiry.defnCRminus}
  \mathcal C_R^- := \left\{R\re^{\ri\theta}: -\pi < \theta < 0\right\}.
\end{equation}

\begin{rmk} \label{int-sing-r}
  The unique zero of $\Delta(\lambda)$ at $\lambda=0$ is an integrable singularity of the relevant integrals in~\eqref{utm-u3} and~\eqref{utm-v3} since
  \begin{align}
    \frac{\rho(\lambda) \, \what u_0(-\sigma(\lambda))}{\Delta(\lambda)}
    &=
    \frac{1}{\alpha-1} \int_0^\infty u_0(x)  \,
    \frac{\re^{\ri\sigma(\lambda)x}}{\rho(\lambda) + \alpha^2 \sigma(\lambda)} \D x,
    \\
    \frac{- \what v_0(\rho(\lambda)) + \what v_0(\alpha^2 \rho(\lambda))}{\Delta(\lambda)}
    &=
    \frac{1}{\alpha-1} \int_0^\infty v_0(x)  \, \frac{\re^{-\ri \alpha^2 \rho(\lambda) x} - \re^{-\ri \rho(\lambda) x}}{\rho(\lambda)\left[\rho(\lambda) + \alpha^2 \sigma(\lambda)\right]} \D x,
  \end{align}
  and near $\lambda=0$ we have the series expansions
  \begin{align*}
    \frac{\re^{\ri\sigma(\lambda)x}}{\rho(\lambda) + \alpha^2 \sigma(\lambda)}
    &=
    \lambda^{-\frac 13} - \alpha^2 \lambda^{-\frac 16} + \bigoh{1},
    \\
    \frac{\re^{-\ri \alpha^2 \rho(\lambda) x} - \re^{-\ri \rho(\lambda) x}}{\rho(\lambda) \left[\rho(\lambda) + \alpha^2 \sigma(\lambda)\right]}
    &=
    \ri\left(1 -\alpha^2 \right)x \lambda^{-\frac 13} - \sqrt 3 \, x \lambda^{-\frac 16} + \bigoh{1}.
  \end{align*}
  However, in order to simplify justification of deformation~\eqref{u-deform} and those similar to it below, we have chosen to avoid $\lambda = 0$ by deforming locally into the lower half of the complex $\lambda$ plane.
\end{rmk}

Integrating by parts under the half line Fourier transform, we have
\begin{equation*}
  \what u(-\sigma(\lambda), t) \equiv \int_0^\infty \re^{\ri\sigma(\lambda) y} u(y, t) \D y
  =
  -\frac{u(0, t)}{i\sigma(\lambda)} - \frac{1}{i\sigma(\lambda)} \int_0^\infty \re^{\ri\sigma(\lambda) y} u_y(y, t) \D y.
\end{equation*}
Thus, back to~\eqref{u-deform}, for any $R>1$ (so that $\abs{\sigma(R \re^{\ri\theta})}-\abs{\rho(R \re^{\ri\theta})} = R^{1/2} - R^{1/3} > 0$), thanks to the first inequality in~\eqref{im-ineq} we find
\begin{align}
  \abs{
    \left(\alpha-1\right)
    \int_{\mathcal C_R^-} \re^{\ri\sigma(\lambda)x}  \,
    \frac{\rho(\lambda) \, \what u(-\sigma(\lambda), t)}{\Delta(\lambda)}
    \D\lambda
  }
  \hspace{-15em}&\hspace{15em}
  \nn\\
  &\leq
\begin{multlined}[t]
  \abs{u(0, t)}
  \abs{
    \int_{\mathcal C_R^-} \re^{\ri\sigma(\lambda)x}  \,
    \frac{\D\lambda}{\sigma(\lambda)\left[\rho(\lambda) + \alpha^2 \sigma(\lambda)\right]}
  }
  \\
  \hspace{5em}
  +
  \abs{
    \int_{\mathcal C_R^-} \re^{\ri\sigma(\lambda)x}  \int_0^\infty \re^{\ri\sigma(\lambda) y} u_y(y, t) dy \,
    \frac{\D\lambda}{\sigma(\lambda)\left[\rho(\lambda) + \alpha^2 \sigma(\lambda)\right]}
  }
\end{multlined}
  \nn\\
  &\leq
\begin{multlined}[t]
  \norm{u(t)}_{\Lebesgue_x^\infty(0, \infty)}
  \int_{-\pi}^0 \re^{-x\Im(\sigma(R \re^{\ri\theta}))} \frac{R \D\theta}{R^{\frac 12}\abs{\rho(R \re^{\ri\theta}) + \alpha^2 \sigma(R \re^{\ri\theta})}}
  \nn\\
  \hspace{5em}
  +
  \int_0^\infty \abs{u_y(y, t)}
  \int_{-\pi}^0 \re^{-(x+y)\Im(\sigma(R \re^{\ri\theta}))} \frac{R \D\theta}{R^{\frac 12}\abs{\rho(R \re^{\ri\theta}) + \alpha^2 \sigma(R \re^{\ri\theta})}} \D y
\end{multlined}
  \nn\\
  &\leq
  \left(\norm{u(t)}_{\Lebesgue_x^\infty(0, \infty)} + \norm{u_x(t)}_{\Lebesgue_x^1(0, \infty)}\right)
  \int_{-\pi}^0 \re^{x R^{\frac 12} \sin\left(\frac \theta 2\right)} \frac{R^{\frac 12} \D\theta}{\abs{\sigma(R \re^{\ri\theta})}-\abs{\rho(R \re^{\ri\theta})}}
  \nn\\
  &\overset{\mathclap{\theta = -2\phi}}{=}\hspace{0.8em}
  \left(\norm{u(t)}_{\Lebesgue_x^\infty(0, \infty)} + \norm{u_x(t)}_{\Lebesgue_x^1(0, \infty)}\right)\int_0^{\frac \pi 2} \re^{-x R^{\frac 12} \sin(\phi)} \frac{2R^{\frac12} \D\phi}{R^{\frac 12} - R^{\frac 13}}
  \nn\\
  &\leq
  \left(\norm{u(t)}_{\Lebesgue_x^\infty(0, \infty)} + \norm{u_x(t)}_{\Lebesgue_x^1(0, \infty)}\right)\frac{2R^{\frac 12}}{R^{\frac 12} - R^{\frac 13}}
  \int_0^{\frac \pi 2} \re^{-x R^{\frac 12} \sin(\phi)} \D\phi
  \nn\\
  &\leq
  \left(\norm{u(t)}_{\Lebesgue_x^\infty(0, \infty)} + \norm{u_x(t)}_{\Lebesgue_x^1(0, \infty)}\right)\frac{2R^{\frac 12}}{R^{\frac 12} - R^{\frac 13}}
  \int_0^{\frac \pi 2} \re^{-x R^{\frac 12} \cdot \frac 2\pi \phi} \D\phi
  \nn\\
  &=
  \left(\norm{u(t)}_{L_x^\infty(0, \infty)} + \norm{u_x(t)}_{L_x^1(0, \infty)}\right)\frac{2R^{\frac 12}}{R^{\frac 12} - R^{\frac 13}}
  \cdot
  \frac{\pi}{2xR^{\frac 12}} \, \Big(1-\re^{-xR^{\frac 12}}\Big),
\end{align}
where we have also used the convexity inequality $\sin(\phi) \geq 2\phi/\pi$, $0\leq \phi \leq \pi/2$, in the penultimate step.
Therefore,
\begin{equation} \label{u=0}
  \int_{\wt{\mathbb R}} \re^{\ri\sigma(\lambda)x}  \,
  \frac{\rho(\lambda) \, \what u(-\sigma(\lambda), t)}{\Delta(\lambda)}
  \D\lambda = 0, \quad x > 0, \ t\in\mathbb R.
\end{equation}
Similarly, by analyticity and exponential decay, it follows that
\begin{equation} \label{v=0}
  \int_{\wt{\RR}} \re^{\ri\sigma(\lambda)x} \, \frac{\what v(\alpha^j \rho(\lambda), t)}{\Delta(\lambda)}
  \D\lambda
  =
  \lim_{R\to\infty} \int_{\mathcal C_R^-} \re^{\ri\sigma(\lambda)x} \, \frac{\what v(\alpha^j\rho(\lambda), t)}{\Delta(\lambda)}
  \D\lambda
  = 0, \quad x>0, \ t\in\mathbb R, \ j=0, 2,
\end{equation}
since thanks to the first, second and fourth inequality in \eqref{im-ineq} we have the bound
\begin{align}
  \abs{
    \left(\alpha-1\right)
    \int_{\mathcal C_R^-} e^{i\sigma(\lambda)x} \, \frac{\what v(\alpha^j\rho(\lambda), t)}{\Delta(\lambda)}
    \D\lambda
  }
  &=
  \abs{
    \int_{\mathcal C_R^-} \re^{\ri\sigma(\lambda)x}  \int_0^\infty \re^{-\ri\alpha^j\rho(\lambda) y} v(y, t) \D y \,
    \frac{\D\lambda}{\rho(\lambda) \left[\rho(\lambda) + \alpha^2 \sigma(\lambda)\right]}
  }
  \nn\\
  &\leq
  \norm{v(t)}_{\Lebesgue_x^1(0, \infty)}
  \int_{-\pi}^0 \re^{-x \Im(\sigma(R \re^{\ri\theta}))} \frac{R \D\theta}{R^{\frac 13}  (R^{\frac 12} - R^{\frac 13})}
  \nn\\
  &\leq
  \norm{v(t)}_{\Lebesgue_x^1(0, \infty)}
  \frac{2R^{\frac 23}}{R^{\frac 12} - R^{\frac 13}}
  \cdot
  \frac{\pi}{2xR^{\frac 12}} \, \Big(1-\re^{-xR^{\frac 12}}\Big),
\end{align}
valid for $R>1$, with $j\in\{0, 2\}$.
Proceeding entirely analogously with representation~\eqref{utm-v3} for the Airy part of the problem, this time using all four inequalities in~\eqref{im-ineq}, we have for $R>1$
\begin{align}
  \abs{
    \left(\alpha-1\right)
    \int_{\mathcal C_R^-} \re^{\ri\alpha \rho(\lambda) x}
    \,
    \frac{\rho(\lambda) \, \what u(-\sigma(\lambda), t)}{\Delta(\lambda)}
    \D\lambda
  }
  \hspace{-12em}&\hspace{12em}
  \nn\\
  &\leq
  \left(\norm{u(t)}_{\Lebesgue_x^\infty(0, \infty)} + \norm{u_x(t)}_{\Lebesgue_x^1(0, \infty)}\right)
  \frac{R^{\frac 12}}{R^{\frac 12}-R^{\frac 13}}
  \int_{-\pi}^0 \re^{-x R^{\frac 13} \sin\left(\frac{\theta+2\pi}{3}\right)} \D\theta
  \nn\\
  &\overset{\mathclap{\theta+2\pi=3\phi}}{=}\hspace{1.1em}
  \left(\norm{u(t)}_{\Lebesgue_x^\infty(0, \infty)} + \norm{u_x(t)}_{\Lebesgue_x^1(0, \infty)}\right)
  \frac{6R^{\frac 12}}{R^{\frac 12}-R^{\frac 13}}
  \int_{\frac \pi 3}^{\frac \pi 2} \re^{-x R^{\frac 13} \sin(\phi)} \D\phi
  \nn\\
  &\leq
  \left(\norm{u(t)}_{\Lebesgue_x^\infty(0, \infty)} + \norm{u_x(t)}_{\Lebesgue_x^1(0, \infty)}\right)
  \frac{6R^{\frac 12}}{R^{\frac 12}-R^{\frac 13}}
  \int_{\frac \pi 3}^{\frac \pi 2} \re^{-x R^{\frac 13} \cdot \frac 2\pi \phi} \D\phi
  \nn\\
  &=
  \left(\norm{u(t)}_{\Lebesgue_x^\infty(0, \infty)} + \norm{u_x(t)}_{\Lebesgue_x^1(0, \infty)}\right)
  \frac{6R^{\frac 12}}{R^{\frac 12}-R^{\frac 13}}
  \cdot
  \frac{\pi}{2x R^{\frac 13}}
  \, \Big(\re^{-\frac 23 x R^{\frac 13}} - \re^{-x R^{\frac 13}} \Big)
\end{align}
and, for $R>1$ and $j\in\{0,2\}$,
\begin{multline*}
  \abs{
    \left(\alpha-1\right)
    \int_{\mathcal C_R^-} \re^{i\alpha \rho(\lambda) x}
    \,
    \frac{\what v(\alpha^j \rho(\lambda), t)}{\Delta(\lambda)}
    \D\lambda
  }
  \leq
  \norm{v(t)}_{\Lebesgue_x^1(0, \infty)} \frac{R^{\frac 23}}{R^{\frac 12} - R^{\frac 13}}
  \int_{-\pi}^0 \re^{-x R^{\frac 13} \sin\left(\frac{\theta+2\pi}{3}\right)} \, \D\theta
  \\
  \leq
  \norm{v(t)}_{\Lebesgue_x^1(0, \infty)} \frac{6R^{\frac 23}}{R^{\frac 12} - R^{\frac 13}}
  \cdot
  \frac{\pi}{2x R^{\frac 13}}
  \, \Big(e^{-\frac 23 x R^{\frac 13}} - \re^{-x R^{\frac 13}} \Big).
\end{multline*}
Hence, since both of the above bounds tend to zero as $R\to\infty$, we infer
\begin{equation} \label{v=02}
  \int_{\wt{\RR}} \re^{\ri\alpha \rho(\lambda) x}
  \,
  \frac{\left(1-\alpha^2\right)\rho(\lambda) \, \what u(-\sigma(\lambda), t) - \what v(\rho(\lambda), t) + \what v(\alpha^2 \rho(\lambda), t)}{\Delta(\lambda)}
  \D\lambda = 0, \quad x>0, \ t\in\mathbb R.
\end{equation}

In view of~\eqref{u=0},~\eqref{v=0} and~\eqref{v=02}, the representations~\eqref{utm-u3} and~\eqref{utm-v3} simplify to the \emph{explicit solution formulae}
\begin{multline} \label{utm-u-sol}
  u(x, t)
  =
  \frac{1}{2\pi} \int_{\RR} \re^{\ri kx-\ri k^2 t} \, \what u_0(k) \D k
  -
  \frac{1}{2\pi} \int_{\CSchro} \re^{\ri kx-\ri k^2 t} \, \what u_0(-k) \D k
  \\
  +
  \frac{1}{2\pi} \int_{\wt{\RR}} \re^{\ri\sigma(k)x+\ri k t}
  \,
  \frac{\left(1-\alpha^2\right)\rho(k) \, \what u_0(-\sigma(k)) - \what v_0(\rho(k)) + \what v_0(\alpha^2 \rho(k))}{\left(\alpha-1\right)\rho(k) \left[\rho(k) + \alpha^2 \sigma(k)\right]}
  \D k
\end{multline}
and
\begin{multline} \label{utm-v-sol}
  v(x, t)
  =
  \frac{1}{2\pi} \int_{\RR} \re^{\ri kx+\ri k^3 t} \, \what v_0(k) \D k
  +
  \frac{1}{2\pi} \int_{\CAiry} \re^{\ri kx+\ri k^3 t} \left[\alpha \, \what v_0(\alpha k) + \alpha^2 \, \what v_0(\alpha^2 k)\right] \D k
  \\
  +
  \frac{1}{2\pi} \int_{\wt{\RR}} \re^{\ri\alpha \rho(k) x+\ri k t}
  \,
  \frac{\left(1-\alpha^2\right)\rho(k) \, \what u_0(-\sigma(k)) - \what v_0(\rho(k)) + \what v_0(\alpha^2 \rho(k))}{\left(\alpha-1\right)\rho(k) \left[\rho(k) + \alpha^2 \sigma(k)\right]}
  \D k.
\end{multline}
The solution formulae in proposition~\ref{prop:lS-Airy} follow by
\begin{equation} \label{eqn:w-to-u-change-of-variable}
  w(x, t)=u(-x, t),
  \qquad\mbox{hence}\qquad
  \what w_0(k) = \what u_0(-k).
\end{equation}

\section{Linear Schr\texorpdfstring{\"o}{o}dinger and linearized Korteweg de Vries} \label{sec:lSlKdV}

We will prove proposition~\ref{prop:lS-lKdV} by solving the interface problem~\eqref{eqn:LS},~\eqref{eqn:lKdV},~\eqref{eqn:IfC.1}
where, as before, $g_j$ are merely a notational convenience but both $w_0$ and $v_0$ are given as initial data.
The introduction of the nonzero real parameter $a$ complicates this problem but, as we shall show, this does not preclude adaptation of the method described in~\S\ref{sec:lSAiry}.


As before, we reexpress the linear Schr\"odinger problem on the positive half line as $u(x, t) := w(-x, t)$ so that it satisfies problem~\eqref{eqn:LSAiryPositive}, has solution~\eqref{utm-u} for $g_0$ remaining to be determined, and obeys the global relation~\eqref{gr-ls-0}.

The linearized Korteweg de Vries (lKdV) problem has solution satisfying not~\eqref{utm-v} but
\begin{multline} \label{utm-v-lKdV}
  2\pi v(x, t)
  =
  \int_{\mathbb R} \re^{\ri kx+\ri (k^3-ak) t} \, \what v_0(k) \D k
  \\
  +
  \int_{\M{\mathcal C}3R} \re^{\ri \nu(k)x+\ri k^3 t} \frac{(\nu(\alpha k)-\nu(k))\what v_0(\nu(\alpha^2k)) - (\nu(\alpha^2k)-\nu(k))\what v_0(\nu(\alpha k))}{\nu(\alpha^2k)-\nu(\alpha k)} \nu'(k) \D k
  \\
  -
  \int_{\M{\mathcal C}3R} \re^{\ri \nu(k)x+\ri k^3 t} \left[ \left( \nu(k) - \nu(\alpha k) - \nu(\alpha^2 k) \right) \nu(k) + \nu(\alpha k) \nu(\alpha^2k) \right] \wt g_0(-\ri k^3, t) \nu'(k) \D k,
\end{multline}
where $\alpha = \re^{\ri\frac{2\pi}{3}}$ and $\M{\mathcal C}3R = \partial \M D3R$ for the region
\begin{equation}
  \M D3R := \left\{ k \in \mathbb C: \frac \pi 3 < \arg(k) < \frac{2\pi}{3} \mbox{ and } \abs k > R \right\},
\end{equation}
in which $R>0$ is sufficently large and $\nu$ is a biholomorphism from $\CC\setminus B(0,R)$ to the exterior of a suitable compact region satisfying
\begin{equation} \label{eqn:nu}
  \nu(k)^3-a\nu(k)=k^3
  \quad\mbox{with asymptotics}\quad
  \nu(k) = k+\bigoh{k^{-1}},
  \quad\mbox{and}\quad
  \nu'(k) = 1+\bigoh{k^{-2}},
\end{equation}
uniformly in $\arg(k)$ as $k\to\infty$.
That such $\nu$ exists is justified by~\cite[proposition~1]{ABS2022a} and~\cite[lemma~1]{Smi2026a}.
Solution formula~\eqref{utm-v-lKdV} may be obtained from~\cite[proposition~1.2 and equation~(1.54)]{Fok2008a} via a compact contour deformation and change of integration variable $k\mapsto\nu(k)$.
Similarly obtained is the global relation
\begin{equation}
  \re^{-\ri k^3 t} \, \what v(\nu(k), t)
  =
  \what v_0(\nu(k)) + \wt g_2(-\ri k^3, t) + \ri \nu(k) \, \wt g_1(-\ri k^3, t) - [\nu(k)^2-a] \wt g_0(-\ri k^3, t),
  \label{gr-lKdV-0}
\end{equation}
valid for all $k$ having $\Im(k)\leq0$ and $\abs k>R$.

\begin{rmk}
  The biholomorphism $\nu$ is not analogous to the root functions $\sigma,\rho$ introduced in~\S\ref{sec:lSAiry}.
  This is evident from the stated large $\la$ asymptotic for $\nu$: it is approximately the identity function.
  The purpose of $\nu$ is that, when composed with the polynomial $k\mapsto k^3-ak$, which is the symbol of the spatial differential operator associated with the PDE of~\eqref{eqn:lKdV}, the monomial $k\mapsto k^3$ is produced.
  To analyse problem~\eqref{eqn:LS},~\eqref{eqn:lKdV},~\eqref{eqn:IfC.1} in full, we shall have to use both $\sigma,\rho$ as before, but also $\nu$.

  The precise formulation of equation~\eqref{utm-v-lKdV} is a change of variables $k\mapsto \nu(k)$ away from the traditional formulation as it appears in~\cite{Fok2008a}.
  We find our version more convenient, because it has simple contours $\M{\mathcal C}3R$ composed of only rays and circular arcs, instead of more complicated curves satisfying $\Im(k^3-ak)=0$.
\end{rmk}

We use the same root functions $\sigma$ and $\rho$ as in~\S\ref{sec:lSAiry} to obtain identities~\eqref{gr-ls-1},~\eqref{gr-ls-2} and, in place of the corresponding Airy identities, the new lKdV identities
\begin{align}
  \notag
  \re^{-\ri\lambda t} \, \what v(\nu(\rho(\lambda)), t)
  &=
  \what v_0(\nu(\rho(\lambda))) + \wt g_2(-\ri \lambda, t) + \ri \nu(\rho(\lambda)) \, \wt g_1(-\ri\lambda, t) - [\nu(\rho(\lambda))^2-a] \wt g_0(-\ri\lambda, t),
  \\
  &\hspace{10em}\tfrac{-3\pi}2\leq\arg(\lambda)\leq 0, \qquad \abs\la \mbox{ sufficiently large},
  \label{gr-lkdv-1}
  \\
  \notag
  \re^{-\ri\lambda t} \, \what v(\alpha^2 \rho(\lambda), t)
  &=
  \what v_0(\nu(\alpha^2 \rho(\lambda))) + \wt g_2(-\ri \lambda, t) + \ri \nu(\alpha^2 \rho(\lambda)) \, \wt g_1(-\ri\lambda, t) - [\nu(\alpha^2 \rho(\lambda))^2-a] \wt g_0(-\ri\lambda, t),
  \\
  &\hspace{10em}-\pi\leq\arg(\lambda)\leq \tfrac\pi2, \qquad \abs\la \mbox{ sufficiently large},
  \label{gr-lkdv-2}
\end{align}
where the relevant regions of validity are due to the transformations of the region
\[
  \{k\in\CC:\text{Im}(k) \leq 0 \mbox{ and } \abs k > R\}
\]
under the prescribed changes of variable, as may be deduced from figures~\ref{fig:sigma} and~\ref{fig:rho}.

We combine identities~\eqref{gr-ls-2},~\eqref{gr-lkdv-1}, and~\eqref{gr-lkdv-2} to form a $3\times3$ linear system for the transformed interface values $\wt g_0(-\ri\lambda, t)$, $\wt g_1(-\ri\lambda, t)$,  $\wt g_2(-\ri\lambda, t)$:
\begin{equation} \label{lin-sys-lkdv}
  \begin{pmatrix}
    \sigma(\lambda) & 1 & 0 \\
    \nu(\rho(\lambda)^2)-a & \nu(\rho(\lambda)) & 1 \\
    \nu(\alpha^2 \rho(\lambda))^2-a & \nu(\alpha^2 \rho(\lambda)) & 1
  \end{pmatrix}
  \begin{pmatrix}
    -\wt g_0(-\ri\lambda, t) \\
    \ri\, \wt g_1(-\ri\lambda, t) \\
    \wt g_2(-\ri\lambda, t)
  \end{pmatrix}
  =
  \re^{-\ri\lambda t}
  \begin{pmatrix}
    \what u(-\sigma(\lambda), t) \\
    \what v(\nu(\rho(\lambda)), t) \\
    \what v(\nu(\alpha^2 \rho(\lambda)), t)
  \end{pmatrix}
  -
  \begin{pmatrix}
    \what u_0(-\sigma(\lambda)) \\
    \what v_0(\nu(\rho(\lambda))) \\
    \what v_0(\nu(\alpha^2 \rho(\lambda)))
  \end{pmatrix},
\end{equation}
for $\Im(\la)\leq0$ with $\abs\la$ large enough.
The determinant of this system is
\begin{equation} \label{eqn:lSlKdV.Delta}
  \Delta(\lambda) = \left( \nu(\alpha^2\rho(\la))-\nu(\rho(\la)) \right) \left( \nu(\alpha^2\rho(\la))+\nu(\rho(\la))-\sigma(\la) \right),
\end{equation}
which, by the properties of $\nu$ and the analysis following equation~\eqref{eqn:lSAiry.Delta}, has no zeros outside a bounded region.
The first entry of the solution of linear system~\eqref{lin-sys-lkdv} satisfies
\begin{multline} \label{g0-sol-lKdV}
  \Delta(\lambda) \, \wt g_0(-\ri\lambda, t)
  =
  \left[\left( \nu(\rho(\lambda))-\nu(\alpha^2\rho(\lambda)) \right) \what u_0(-\sigma(\lambda)) - \what v_0(\nu(\rho(\lambda))) + \what v_0(\nu(\alpha^2\rho(\lambda)))\right]
  \\
  - \re^{-\ri\lambda t} \left[\left( \nu(\rho(\lambda))-\nu(\alpha^2\rho(\lambda)) \right) \what u(-\sigma(\lambda), t) - \what v(\nu(\rho(\lambda)), t) + \what v(\nu(\alpha^2\rho(\lambda)), t)\right],
\end{multline}
for $\Im(\la)\leq0$ with $\abs\la$ large enough.

We select the same changes of variables in the latter integrals of solution representations~\eqref{utm-u} and \eqref{utm-v-lKdV}, arriving at
\begin{equation} \label{utm-u2-lS}
  2\pi u(x, t)
  =
  \int_{\mathbb R} \re^{\ri kx-\ri k^2 t} \, \what u_0(k) \D k
  -
  \int_{\CSchro} \re^{\ri kx-\ri k^2 t} \, \what u_0(-k) \D k
  +
  \int_{\wt\RR} \re^{\ri\sigma(\lambda)x+\ri\lambda t}  \, \wt g_0(-\ri\lambda, t) \D\lambda.
\end{equation}
and
\begin{multline} \label{utm-v2-lKdV}
  2\pi v(x, t)
  =
  \int_{\mathbb R} \re^{\ri kx+\ri (k^3-ak) t} \, \what v_0(k) \D k
  \\
  +
  \int_{\M{\mathcal C}3R} \re^{\ri \nu(k)x+\ri k^3 t} \frac{(\nu(\alpha k)-\nu(k))\what v_0(\nu(\alpha^2k)) - (\nu(\alpha^2k)-\nu(k))\what v_0(\nu(\alpha k))}{\nu(\alpha^2k)-\nu(\alpha k)} \nu'(k) \D k
  \\
  -
  \int_{\wt\RR} \re^{\ri\nu(\alpha \rho(\lambda)) x+\ri\lambda t} N(\la) \wt g_0(-\ri\lambda, t) \D\lambda,
\end{multline}
in which the contour $\wt{\RR}$ bypasses $\lambda=0$ via a semicircle of sufficiently large radius lying in the lower half of the complex $\lambda$-plane and
\[
  N(\la) = \frac{\left( \nu(\alpha \rho(\la)) - \nu(\alpha^2\rho(\la)) - \nu(\rho(\la)) \right) \nu(\alpha\rho(\la)) + \nu(\alpha^2\rho(\la)) \nu(\rho(\la))}{3\alpha^2\rho(\la)^2} \nu'(\alpha\rho(\la)).
\]
The criteria on $\nu$ guarantee that $N(\la) = 1 + \bigoh{\la^{-2/3}}$, uniformly in the argument of $\la$, as $\la\to\infty$ within $\clos\CC^-$, and $N$ is holomorphic to the right of the contour $\wt\RR$ and continuous onto $\wt\RR$.
Note that the semicircular component of $\wt{\RR}$ must now be of radius at least $R^3$, and possibly larger still to avoid zeros of $\Delta$.
This is in contrast to the situation in~\S\ref{sec:lSAiry} wherein the radius need only have been positive.

Then, using~\eqref{g0-sol-lKdV}, we substitute for $\wt g_0(-\ri\lambda, t)$ to obtain
\begin{multline} \label{utm-u3-lS}
  2\pi u(x, t)
  =
  \int_{\RR} \re^{\ri kx-\ri k^2 t} \, \what u_0(k) \D k
  -
  \int_{\CSchro} \re^{\ri kx-\ri k^2 t} \, \what u_0(-k) \D k
  \\
  +
  \int_{\wt{\RR}} \re^{\ri\sigma(\lambda)x+\ri\lambda t}
  \,
  \frac{\left( \nu(\rho(\lambda))-\nu(\alpha^2\rho(\lambda)) \right) \what u_0(-\sigma(\lambda)) - \what v_0(\nu(\rho(\lambda))) + \what v_0(\nu(\alpha^2\rho(\lambda)))}{\Delta(\lambda)}
  \D \lambda
\end{multline}
and
\begin{multline} \label{utm-v3-lKdV}
  2\pi v(x, t)
  =
  \int_{\mathbb R} \re^{\ri kx+\ri (k^3-ak) t} \, \what v_0(k) \D k
  \\
  +
  \int_{\M{\mathcal C}3R} \re^{\ri \nu(k)x+\ri k^3 t} \frac{(\nu(\alpha k)-\nu(k))\what v_0(\nu(\alpha^2k)) - (\nu(\alpha^2k)-\nu(k))\what v_0(\nu(\alpha k))}{\nu(\alpha^2k)-\nu(\alpha k)} \nu'(k) \D k
  \\
  +
  \int_{\wt\RR} \re^{\ri\nu(\alpha \rho(\lambda)) x+\ri\lambda t} N(\la)
  \,
  \frac{\left( \nu(\rho(\lambda))-\nu(\alpha^2\rho(\lambda)) \right) \what u_0(-\sigma(\lambda)) - \what v_0(\nu(\rho(\lambda))) + \what v_0(\nu(\alpha^2\rho(\lambda)))}{\Delta(\lambda)}
  \D\lambda
  ,
\end{multline}
where the removal of the integrals involving $\what u(\argdot,t),\what v(\argdot,t)$ is justified by the same argument as that in~\S\ref{sec:lSAiry}, via the asymptotics on $N$ given above and $\nu$ described in its definition~\eqref{eqn:nu}.
Proposition~\ref{prop:lS-lKdV} follows from equations~(\ref{utm-u3-lS}--\ref{utm-v3-lKdV}) via change of variables~\eqref{eqn:w-to-u-change-of-variable}.

\section{Heat and Airy} \label{sec:heatAiry}

We solve the interface problem~\eqref{eqn:h},~\eqref{eqn:Airy},~\eqref{eqn:IfC.1} for initial data $w_0$ and $v_0$, and thereby prove proposition~\ref{prop:lS-lKdV}.

The Airy problem has solution given by~\eqref{utm-v} and global relation~\eqref{gr-airy-0}.
We rewrite the heat problem using $u(x,t) = w(-x,t)$ as
\begin{subequations} \label{eqn:heatAiryPositive}
\begin{align}
  \label{eqn:heatAiryPositive.hPDE} \tag{\theparentequation.PDE}
  u_t - u_{xx} &= 0 & (x,t) &\in (0,\infty)\times(0,T), \\
  \label{eqn:heatAiryPositive.hInC} \tag{\theparentequation.InC}
  u(x,0) &= u_0(x) := w_0(-x) & x &\in(0,\infty), \\
  \label{eqn:heatAiryPositive.IfC} \tag{\theparentequation.IfC}
  \partial_x^j u(0,t) &= (-1)^j g_j(t) & t &\in [0,T], \qquad j\in\{0,1\},
\end{align}
\end{subequations}
From~\cite[equation~(3.9)]{DTV2014a}, the $u$ problem has solution
\begin{equation} \label{eqn:HeatAiry.HeatEF}
  2\pi u(x,t) = \int_\RR \re^{\ri kx-k^2t}\hat u_0(k)\D k - \int_{\CHeat} \re^{\ri kx-k^2t} \hat u_0(-k) \D k - \int_{\CHeat} \re^{\ri kx - k^2t} 2\ri k \tilde g_0(k^2,t)\D k,
\end{equation}
in which $\CHeat$ denotes the positively oriented boundary of the region
\begin{equation}
  \DHeat := \left\{ k \in \mathbb C: \frac \pi 4 < \arg(k) < \frac{3\pi}{4} \right\}.
\end{equation}
The global relation
\begin{equation} \label{eqn:HeatAiry.HeatGR}
  \hat u_0(k) - \left[ -\tilde g_1(k^2,t) + \ri k \tilde g_0(k^2,t) \right] = \re^{k^2t} \hat u(k,t), \quad \Im(k)\leq0.
\end{equation}
holds for all $t\geq0$.

Analogously to the problem studied in~\S\ref{sec:lSAiry}, we seek complex functions $\tau$ and $\rho$ having branch cuts, of square root and cube root type respectively, beginning at $0$ and extending to $\infty$ along the positive imaginary axis, such that
\[
  \la=\ri[\pm\tau(\la)]^2,
  \qquad
  \la=[\alpha^j\rho(\la)]^3,
  \qquad j\in\{0,1,2\},
\]
where $\alpha=\re^{\ri\frac{2\pi}3}$ is a primitive cube root of unity.
We find it convenient to define $\rho$ the same way as in~\S\ref{sec:lSAiry}, by equation~\eqref{eqn:defn.rho}.
We define $\tau$ by equation~\eqref{eqn:defn.tau}; see figure~\ref{fig:tau}.

\begin{figure}
  \centering
  \includegraphics[scale=0.7]{./gfx/fig-tau.mps}
  \caption{The locus of $\pm \tau(\lambda)$.}
  \label{fig:tau}
\end{figure}

Applying the maps $k=\rho(\la)$ and $k=\alpha^2\rho(\la)$ to equation~\eqref{gr-airy-0}, we obtain~\eqref{gr-airy-1} and~\eqref{gr-airy-2}, and applying the map $k=-\tau(\la)$ to equation~\eqref{eqn:HeatAiry.HeatGR} we arrive at
\begin{equation}
  \re^{-\ri\la} \hat u(-\tau(\la),t) = \hat u_0(-\tau(\la)) + \tilde g_1(-\ri\la,t) + \ri\tau(\la) \tilde g_0(-\ri\la,t),
\end{equation}
which is valid for all $\la\in\CC$ so, in particular, is valid for $\la\in\clos(\CC^-)$.
Combining these into a linear system, we have
\begin{equation}
  \begin{pmatrix}
    -\ri\tau(\la) & -\ri & 0 \\
    \rho(\la)^2 & \rho(\la) & 1 \\
    \alpha\rho(\la)^2 & \alpha^2\rho(\la) & 1
  \end{pmatrix}
  \begin{pmatrix}
    -\tilde g_0(-\ri\la,t) \\
    \ri \tilde g_1(-\ri\la,t) \\
    \tilde g_2(-\ri\la,t)
  \end{pmatrix}
  =
  \re^{-\ri\la t}
  \begin{pmatrix}
    \hat u(-\tau(\la),t) \\
    \hat v(\rho(\la),t) \\
    \hat v(\alpha^2\rho(\la),t)
  \end{pmatrix}
  -
  \begin{pmatrix}
    \hat u_0(-\tau(\la)) \\
    \hat v_0(\rho(\la)) \\
    \hat v_0(\alpha^2\rho(\la))
  \end{pmatrix},
\end{equation}
which holds on $\clos(\CC^-)$.
Its determinant $\Delta$ is given by
\begin{equation} \label{eqn:heatAiry.Delta}
  \Delta(\lambda) = -\ri \left(\alpha-1\right) \rho(\lambda) \left[\rho(\lambda) + \alpha^2 \tau(\lambda)\right],
\end{equation}
which has no nonzero zeros,
and the first entry of its solution satisfies
\begin{multline} \label{eqn:HeatAiry.g0}
  \Delta(\la)\tilde g_0(-\ri\la,t) = \left[ (1-\alpha^2)\rho(\la) \hat u_0(-\tau(\la)) + \ri \hat v_0(\rho(\la)) - \ri \hat v_0(\alpha^2\rho(\la)) \right] \\
  - \re^{-\ri\la t} \left[ (1-\alpha^2) \rho(\la) \hat u (-\tau(\la),t) + \ri \hat v(\rho(\la),t) - \ri \hat v(\alpha^2\rho(\la),t) \right]
\end{multline}
on $\clos(\CC^-)$.

In order to prepare the latter integrals of equations~\eqref{eqn:HeatAiry.HeatEF} and~\eqref{utm-v} for substitution of $\tilde g_0(-\ri\la,t)$ from equation~\eqref{eqn:HeatAiry.g0}, we follow the same approach as in previous sections: select the change of variables which maps $\la\in\RR$ to $k$ in the relevant contour.
Indeed, for the latter integral of equation~\eqref{utm-v}, we use the same change of variables as before, $k=\alpha\rho(\la)$, because it maps $\la\in\RR$ to $k\in \CAiry$, and for the latter integral of equation~\eqref{eqn:HeatAiry.HeatEF}, we use $k=\tau(\la)$.
Anticipating the zero of $\Delta$ at $0$, we make the usual contour deformation from $\RR$ to $\wt\RR$, then make the substitutions for $\tilde g_0(-\ri\la,t)$, obtaining
\begin{multline} \label{utm-u3-heat}
  2\pi u(x, t)
  =
  \int_{\RR} \re^{\ri kx-k^2 t} \, \what u_0(k) \D k
  -
  \int_{\CHeat} \re^{\ri kx-k^2 t} \, \what u_0(-k) \D k
  \\
  +
  \int_{\wt{\RR}} \re^{\ri\tau(\lambda)x+\ri\lambda t}
  \,
  \frac{\left(1-\alpha^2\right)\rho(\lambda) \, \what u_0(-\tau(\lambda)) + \ri \what v_0(\rho(\lambda)) - \ri \what v_0(\alpha^2 \rho(\lambda))}{\Delta(\lambda)}
  \D \lambda
  \\
  -
  \int_{\wt{\RR}} \re^{\ri\tau(\lambda)x}  \,
  \frac{\left(1-\alpha^2\right)\rho(\lambda) \, \what u(-\tau(\lambda), t) + \ri \what v(\rho(\lambda), t) - \ri \what v(\alpha^2 \rho(\lambda), t)}{\Delta(\lambda)}
  \D\lambda
\end{multline}
and
\begin{multline} \label{utm-v3-lKdV-with-heat}
  2\pi v(x, t)
  =
  \int_{\RR} \re^{\ri kx+\ri k^3 t} \, \what v_0(k)\D k
  +
  \int_{\CAiry} \re^{\ri kx+\ri k^3 t} \left[\alpha \, \what v_0(\alpha k) + \alpha^2 \, \what v_0(\alpha^2 k)\right] \D k
  \\
  +
  \int_{\wt{\RR}} \re^{\ri\alpha \rho(\lambda) x+\ri\lambda t}
  \,
  \frac{\left(1-\alpha^2\right)\rho(\lambda) \, \what u_0(-\tau(\lambda)) + \ri \what v_0(\rho(\lambda)) - \ri \what v_0(\alpha^2 \rho(\lambda))}{\Delta(\lambda)}
  \D\lambda
  \\
  -
  \int_{\wt{\RR}} \re^{\ri\alpha \rho(\lambda) x}
  \,
  \frac{\left(1-\alpha^2\right)\rho(\lambda) \, \what u(-\tau(\lambda), t) + \ri \what v(\rho(\lambda), t) - \ri \what v(\alpha^2 \rho(\lambda), t)}{\Delta(\lambda)}\D\lambda.
\end{multline}

The latter three of inequalities~\eqref{im-ineq} still hold for $\Im(\la)\leq0$, but the first is replaced by the stronger
\begin{equation}
  \Im(\tau(\la)) \geq \frac1{\sqrt2}\abs\la \geq 0.
\end{equation}
This means that, for $x>0$, we may split the exponential
\[
  \re^{\ri\tau(\la)x} = \re^{\ri\tau(\la)\frac x2} \bigoh{\re^{\frac{-x\abs\la}{2\sqrt2}}},
\]
which greatly simplifies the justification of
\begin{align*}
  \int_{\wt{\RR}} \re^{\ri\tau(\lambda)x}  \,
  \frac{\rho(\lambda) \, \what u(-\tau(\lambda), t)}{\Delta(\lambda)}
  \D\lambda &= 0, & &x > 0, \ t\geq0, \\
  \int_{\wt{\RR}} \re^{\ri\tau(\lambda)x}  \,
  \frac{\rho(\lambda) \, \what v(\alpha^j\rho(\lambda), t)}{\Delta(\lambda)}
  \D\lambda &= 0, & &x > 0, \ t\geq0, \ j = 0,2,
\end{align*}
compared with the argument for equations~\eqref{u=0} and~\eqref{v=0} in~\S\ref{sec:lSAiry}.
That
\[
  \int_{\wt{\RR}} \re^{\ri\alpha \rho(\lambda) x}
  \,
  \frac{\left(1-\alpha^2\right)\rho(\lambda) \, \what u(-\tau(\lambda), t) + \ri \what v(\rho(\lambda), t) - \ri \what v(\alpha^2 \rho(\lambda), t)}{\Delta(\lambda)}
  \D\lambda = 0, \quad x>0, \ t\in\mathbb R
\]
follows by the same reasoning as did equation~\eqref{v=02}.
In this way, we obtain explicit solution formulae
\begin{multline} \label{eqn:heatAiry.utm-u-sol}
  u(x, t)
  =
  \frac1{2\pi} \int_{\RR} \re^{\ri kx-k^2 t} \, \what u_0(k) \D k
  -
  \frac1{2\pi} \int_{\CHeat} \re^{\ri kx-k^2 t} \, \what u_0(-k) \D k
  \\
  +
  \frac1{2\pi} \int_{\wt{\RR}} \re^{\ri\tau(\lambda)x+\ri\lambda t}
  \,
  \frac{\left(1-\alpha^2\right)\rho(\lambda) \, \what u_0(-\tau(\lambda)) + \ri \what v_0(\rho(\lambda)) - \ri \what v_0(\alpha^2 \rho(\lambda))}{\Delta(\lambda)}
  \D \lambda,
\end{multline}
and
\begin{multline} \label{eqn:heatAiry.utm-v-sol}
  v(x, t)
  =
  \frac1{2\pi} \int_{\RR} \re^{\ri kx+\ri k^3 t} \, \what v_0(k)\D k
  +
  \frac1{2\pi} \int_{\CAiry} \re^{\ri kx+\ri k^3 t} \left[\alpha \, \what v_0(\alpha k) + \alpha^2 \, \what v_0(\alpha^2 k)\right] \D k
  \\
  +
  \frac1{2\pi} \int_{\wt{\RR}} \re^{\ri\alpha \rho(\lambda) x+\ri\lambda t}
  \,
  \frac{\left(1-\alpha^2\right)\rho(\lambda) \, \what u_0(-\tau(\lambda)) + \ri \what v_0(\rho(\lambda)) - \ri \what v_0(\alpha^2 \rho(\lambda))}{\Delta(\lambda)}
  \D\lambda.
\end{multline}
Proposition~\ref{prop:heat-Airy} follows from equations~(\ref{eqn:heatAiry.utm-u-sol}--\ref{eqn:heatAiry.utm-v-sol}) via change of variables~\eqref{eqn:w-to-u-change-of-variable}.

\section{Biharmonic Schr\texorpdfstring{\"o}{o}dinger and Airy} \label{sec:bihAiry}

We solve the problem~\eqref{eqn:biS},~\eqref{eqn:Airy},~\eqref{eqn:IfC.2}, which features a dispersive equation of higher order on the left half line, thereby proving proposition~\ref{prop:biS-Airy}.
The increased number of interface conditions in comparison to previous problems reflects the higher order of the biharmonic Schr\"odinger equation versus the Schr\"odinger or heat equations.
Indeed, when posed independently on a half line, the biharmonic Schr\"odinger equation requires one more boundary condition than do the linear Schr\"odinger or heat equations.

As before, we make the change of variable $x \mapsto -x$ in order to map the negative half line problem~\eqref{eqn:biS} to one on the positive half line for the function $u(x, t) := w(-x, t)$, i.e.
\begin{subequations} \label{eqn:biSAiryPositive}
\begin{align}
  \label{eqn:biSAiryPositive.biSPDE} \tag{\theparentequation.PDE}
  \ri u_t + u_{xxxx} &= 0 & (x,t) &\in (0,\infty)\times(0,T), \\
  \label{eqn:biSAiryPositive.biSInC} \tag{\theparentequation.InC}
  u(x,0) &= u_0(x) := w_0(-x) & x &\in(0,\infty), \\
  \label{eqn:biSAiryPositive.biSIfC} \tag{\theparentequation.IfC}
  \partial_x^j u(0,t) &= (-1)^j g_j(t) & t &\in [0,T], \qquad j\in\{0,1,2\}.
\end{align}
\end{subequations}
Hence, the interface problem can equivalently be described by~\eqref{eqn:Airy} and~\eqref{eqn:biSAiryPositive}.
As before, we work under the assumption of sufficiently smooth data that decay fast enough at infinity.

The solution of~\eqref{eqn:Airy} satisfies integral representation~\eqref{utm-v}.
Furthermore, in~\cite{OY2019a}, the following solution representation was derived for the linear biharmonic Schr\"odinger equation~\eqref{eqn:biSAiryPositive} by means of the unified transform method:
\begin{multline} \label{utm-bih}
  2\pi u(x, t)
  =
  \int_{\mathbb R} \re^{\ri kx+\ri k^4t} \, \what u_0(k) \D k
  -
  \int_{\CbiSone} e^{\ri kx+\ri k^4t} \left[\left(1+\ri\right) \what u_0(-\ri k) - \ri \, \what u_0(-k) \right] \D k
  \\
  -
  \int_{\CbiStwo} \re^{\ri kx+\ri k^4t} \left[\left(1-\ri\right) \what u_0(\ri k) + \ri \, \what u_0(-k) \right] \D k
  \\
  -
  \int_{\CbiSone} \re^{\ri kx+\ri k^4t} 2\left(1-\ri\right)k^2 \left[ k \, \widetilde g_0(-\ri k^4, t) -  \widetilde g_1(-\ri k^4, t) \right] \D k
  \\
  -
  \int_{\CbiStwo} \re^{\ri kx+\ri k^4t} 2 \left(1+\ri\right) k^2 \left[ k \, \widetilde g_0(-\ri k^4, t) + \widetilde g_1(-\ri k^4, t) \right] \D k,
\end{multline}
where a hat denotes the half line Fourier transform~\eqref{hlft-def}, a tilde denotes the temporal transform~\eqref{tilde-def}, and the complex contours $\CbiSone$ and $\CbiStwo$ are the positively oriented boundaries of the second and fourth octants.
Also provided in~\cite{OY2019a} is the linear biharmonic Schr\"odinger global relation, valid for $\Im(k) \leq 0$,
\begin{equation} \label{gr-bih-0}
  \re^{-\ri k^4 t} \, \what u(k, t)
  =
  \what u_0(k) - \ri \, \wt g_3(-\ri k^4, t) + k \, \wt g_2(-\ri k^4, t) + \ri k^2 \, \wt g_1(-\ri k^4, t) - k^3 \, \wt g_0(-\ri k^4, t).
\end{equation}

For the root type function $\gamma$ defined by~\eqref{eqn:defn.gamma}, the changes of variable $k = \ri^j \gamma(\lambda)$, $j  = 0, 1, 2, 3$, all imply that $\lambda = k^4$ with a branch cut along the positive imaginary axis of the complex $\lambda$ plane.
Then, the global relation~\eqref{gr-bih-0} yields four additional identities:
for $j\in\{0,1,2,3\}$,
\begin{subequations} \label{eqn:bihS-GR-j}
\begin{multline}
  \re^{-\ri \lambda t} \what u(\ri^j\gamma(\lambda), t)
  =
  \what u_0(\ri^j\gamma(\lambda)) - \ri \wt g_3(-\ri\lambda, t) + \ri^j\gamma(\lambda) \wt g_2(-\ri\lambda, t) + \ri^{2j}\ri\gamma(\lambda)^2 \wt g_1(-\ri\lambda, t)
  \\
  - \ri^{3j}\gamma(\lambda)^3 \wt g_0(-\ri\lambda, t),
  \tag{\theparentequation.$j$}
\end{multline}
\end{subequations}
valid for
\begin{equation} \label{eqn:bihS-GR-j-validity}
  \la=0 \mbox{ or }
  \begin{cases}
    -\frac{3\pi}2 \leq \arg(\la) \leq 0 & \mbox{if } j=0, \\
    \lambda\in\emptyset & \mbox{if } j=1, \\
    0 \leq \arg(\la) \leq \frac{\pi}2 & \mbox{if } j=2, \\
    \lambda\in\CC & \mbox{if } j=3.
  \end{cases}
\end{equation}
The domains of validity have been determined by tracking the region $\Im(k) \leq 0$ through each of the four transformations, as shown in figure~\ref{fig:gamma}.
Note that equation~(\ref{eqn:bihS-GR-j}.1) is only valid for $\lambda=0$ and so it is not useful, analogously to~\eqref{gr-airy-3} in the case of the Airy equation.
However equation~(\ref{eqn:bihS-GR-j}.3) is valid for all complex $\lambda$.

\begin{figure}[ht!]
  \centering
  \includegraphics[scale=0.7]{./gfx/fig-gamma}
  \caption{The sign of $\ri^j \gamma(\lambda)$, $j=0, 1, 2, 3$.}
  \label{fig:gamma}
\end{figure}

For our purposes, we will use identities~(\ref{eqn:bihS-GR-j}.0) and~(\ref{eqn:bihS-GR-j}.3) combined with the Airy identities~\eqref{gr-airy-1} and~\eqref{gr-airy-2}, since all four of them are valid for $\Im(\Delta(\la)) \leq 0$.
The resulting linear system will allow us to express the boundary value transforms $\wt g_0(-\ri\lambda, t), \wt g_1(-\ri\lambda, t), \wt g_2(-\ri\lambda, t), \wt g_3(-\ri\lambda, t)$ in terms of the half line Fourier transforms of the initial data, namely $\what u_0(\gamma(\lambda))$, $\what u_0(-\ri\gamma(\lambda))$, $\what v_0(\rho(\lambda))$, $\what v_0(\alpha^2 \rho(\lambda))$, and the corresponding transforms of the solutions $u$ and $v$.
As before, these latter transforms are unknown but will be shown to have zero contribution when the relevant $\lambda$ integrals are applied.
Specifically, our $4\times 4$ system reads, for $\Im\la\leq0$,
\begin{equation}
  \label{lin-sys-bih}
  \begin{pmatrix}
    \gamma(\lambda)^3 &\gamma(\lambda)^2 &\gamma(\lambda) &1
    \\
    \ri \gamma(\lambda)^3 &-\gamma(\lambda)^2 &-\ri \gamma(\lambda) &1
    \\
    \rho(\lambda)^2 &\rho(\lambda) &1 &0
    \\
    \alpha \rho(\lambda)^2 &\alpha^2 \rho(\lambda) &1 &0
  \end{pmatrix}
  \begin{pmatrix}
    -\wt g_0(-\ri\lambda, t)
    \\
    \ri\, \wt g_1(-\ri\lambda, t)
    \\
    \wt g_2(-\ri\lambda, t)
    \\
    -i \, \wt g_3(-\ri\lambda, t)
  \end{pmatrix}
  =
  \begin{pmatrix}
    \re^{-\ri\lambda t} \, \what u(\gamma(\lambda), t) - \what u_0(\gamma(\lambda))
    \\
    \re^{-\ri\lambda t} \, \what u(-\ri\gamma(\lambda), t) - \what u_0(-\ri\gamma(\lambda))
    \\
    \re^{-\ri\lambda t} \, \what v(\rho(\lambda), t) - \what v_0(\rho(\lambda))
    \\
    \re^{-\ri\lambda t} \, \what v(\alpha^2 \rho(\lambda), t) - \what v_0(\alpha^2 \rho(\lambda))
  \end{pmatrix},
\end{equation}
and yields the following expressions for the boundary value transforms $\wt g_0(-\ri\lambda, t)$ and $\wt g_1(-\ri\lambda, t)$:
\begin{align}
  \Delta(\lambda) \, \wt g_0(-\ri\lambda, t)
  &=
  \left(\alpha^2-1\right) \rho(\lambda)
  \left[\re^{-\ri\lambda t} \, \what u(\gamma(\lambda), t) - \what u_0(\gamma(\lambda))\right]
  \nn\\*
  &\qquad
  -
  \left(\alpha^2-1\right) \rho(\lambda) \left[\re^{-\ri\lambda t} \, \what u(-\ri\gamma(\lambda), t) - \what u_0(-\ri\gamma(\lambda))\right]
  \nn\\*
  &\qquad
  -
  \gamma(\lambda) \left[\alpha^2 \left(1+\ri\right) \rho(\lambda) - 2\gamma(\lambda)\right]
  \left[\re^{-\ri\lambda t} \, \what v(\rho(\lambda), t) - \what v_0(\rho(\lambda))
  \right]
  \nn\\*
  &\qquad
  +
  \gamma(\lambda) \left[\left(1+\ri\right) \rho(\lambda) - 2\gamma(\lambda)\right]
  \left[\re^{-\ri\lambda t} \, \what v(\alpha^2 \rho(\lambda), t) - \what v_0(\alpha^2 \rho(\lambda))\right],
  \label{g0t-ba}
  \\
  \ri \Delta(\lambda) \, \wt g_1(-\ri\lambda, t)
  &=
  \left(\alpha-1\right) \rho(\lambda)^2
  \left[\re^{-\ri\lambda t} \, \what u(\gamma(\lambda), t) - \what u_0(\gamma(\lambda))\right]
  \nn\\*
  &\qquad
  -
  \left(\alpha-1\right) \rho(\lambda)^2 \left[\re^{-\ri\lambda t} \, \what u(-\ri\gamma(\lambda), t) - \what u_0(-\ri\gamma(\lambda))\right]
  \nn\\*
  &\qquad
  -
  \left(1+\ri\right) \gamma(\lambda) \left[\alpha \rho(\lambda)^2 + \ri\gamma(\lambda)^2\right]
  \left[\re^{-\ri\lambda t} \, \what v(\rho(\lambda), t) - \what v_0(\rho(\lambda))
  \right]
  \nn\\*
  &\qquad
  +
  \left(1+\ri\right) \gamma(\lambda) \left[\rho(\lambda)^2 + \ri \gamma(\lambda)^2\right]
  \left[\re^{-\ri\lambda t} \, \what v(\alpha^2 \rho(\lambda), t) - \what v_0(\alpha^2 \rho(\lambda))\right],
  \label{g1t-ba}
\end{align}
with the determinant $\Delta(\lambda)$ given by
\begin{equation}
  \Delta(\lambda) =  \left(1+\ri\right) \left(\alpha-1\right) \gamma(\lambda) \rho(\lambda) \left[ -\ri \alpha^2 \gamma(\lambda)^2 + \left(1-\ri\right) \gamma(\lambda) \rho(\lambda) + \alpha  \rho(\lambda)^2 \right].
\end{equation}

\begin{lem} \label{lem:bihS-A.Deltazeros}
  $\Delta(\lambda) = 0$ if and only if $\lambda=0$.
\end{lem}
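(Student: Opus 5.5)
The plan is to factor $\Delta$ and reduce the question to locating the roots of a quadratic in the ratio $\rho(\la)/\gamma(\la)$, then show those roots lie outside the range of that ratio.

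\emph{Factorisation.} From the displayed formula,
\[
  \Delta(\la) = (1+\ri)(\alpha-1)\,\gamma(\la)\rho(\la)\,Q(\la),
  \qquad
  Q(\la) = -\ri\alpha^2\gamma(\la)^2 + (1-\ri)\gamma(\la)\rho(\la) + \alpha\rho(\la)^2 .
\]
By definitions~\eqref{eqn:defn.rho} and~\eqref{eqn:defn.gamma}, $\gamma(\la)=0$ if and only if $\la=0$, and likewise for $\rho$. Also $\Delta(0)=0$. So it suffices to show $Q(\la)\neq0$ for every $\la\neq0$.

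\emph{Reduction to a quadratic.} For $\la\neq0$, divide $Q$ by $\gamma(\la)^2$ and set $z=\rho(\la)/\gamma(\la)$. Then $Q(\la)=0$ is equivalent to
\[
  \alpha z^2+(1-\ri)z-\ri\alpha^2=0 .
\]
The discriminant is $(1-\ri)^2+4\ri\alpha^3=-2\ri+4\ri=2\ri=(1+\ri)^2$. Hence the roots are
\[
  z=\frac{2\ri}{2\alpha}=\ri\alpha^2=\re^{-\ri\pi/6}
  \qquad\text{and}\qquad
  z=\frac{-2}{2\alpha}=-\alpha^2=\re^{\ri\pi/3}.
\]
These are routine computations using $\alpha^3=1$, and I would double-check them.

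\emph{Range of the ratio (the main obstacle).} The substantive step is to determine exactly which values the ratio $z$ can take, keeping careful track of the principal branches. Since both roots in the definitions are taken of the same quantity $\ri\la$ with the same principal argument $\theta=\arg(\ri\la)\in(-\pi,\pi]$, we get
\[
  \frac{\rho(\la)}{\gamma(\la)}
  = \re^{-\ri\pi/6+\ri\pi/8}\,\abs{\la}^{1/3-1/4}\,\re^{\ri\theta(1/3-1/4)}
  = \abs{\la}^{1/12}\,\re^{\ri(\theta/12-\pi/24)} .
\]
Its argument therefore ranges over $(-\pi/8,\,\pi/24]=(-3\pi/24,\,\pi/24]$. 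The two roots have arguments $-\pi/6=-4\pi/24$ and $\pi/3=8\pi/24$, and neither lies in this interval. Only the moduli of the roots could be matched (at $\abs{\la}=1$), and the argument constraint already excludes both. Hence $Q(\la)\neq0$ for all $\la\neq0$, which completes the lemma.

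As a cross-check, I would evaluate $\Delta$ directly at the candidate points $\abs{\la}=1$ with $\arg(\ri\la)=-\pi$ (that is, $-\tfrac{\pi}{8}$ as an endpoint), in the spirit of the check $\Delta(-1)=3$ performed after~\eqref{eqn:lSAiry.Delta}. This guards against an off-by-one branch error at the cut $\arg(\ri\la)=\pi$.
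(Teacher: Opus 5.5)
Your argument is correct and is essentially the paper's: your ratio $\rho(\la)/\gamma(\la)$ is the paper's variable $\re^{-\ri\pi/8}(\ri\la)^{1/12}$ rotated by $\re^{\ri\pi/12}$, so your roots $\re^{-\ri\pi/6}$ and $\re^{\ri\pi/3}$ are the paper's roots $\re^{-\ri\pi/4}$ and $\re^{\ri\pi/4}$ in different coordinates. The only difference is the finish: the paper takes twelfth powers to reach the single candidate $\la=1$ and then checks $\Delta(1)=(1-\sqrt3)(\alpha-1)\neq0$, whereas your observation that the arguments lie in $(-\pi/8,\pi/24]$ excludes both roots directly, so your proposed cross-check at the cut is unnecessary (and $\arg(\ri\la)=-\pi$ is never attained by the principal branch anyway).
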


\begin{proof}
  Recalling the definitions of $\gamma(\lambda)$ and $\rho(\lambda)$, we readily observe that $\lambda=0$ is a zero of $\Delta(\lambda)$.
  By definitions~\eqref{eqn:defn.rho} and~\eqref{eqn:defn.gamma}, $\gamma(\lambda)^2 = \re^{-\ri\frac{\pi}{4}} (\ri\lambda)^{\frac 12}$, $\rho(\lambda)^2 = \re^{-\ri\frac{\pi}{3}} (\ri\lambda)^{\frac 23}$ and $\gamma(\lambda)\rho(\lambda) = \re^{-\ri\frac{7\pi}{24}} (\ri\lambda)^{\frac{7}{12}}$.
  Thus,
  \begin{align}
    -\ri \alpha^2 \gamma(\lambda)^2 + \left(1-\ri\right) \gamma(\lambda) \rho(\lambda) + \alpha  \rho(\lambda)^2
    \hspace{-3em}&\hspace{3em}=
    -\ri \alpha^2 \re^{-\ri\frac{\pi}{4}} (\ri\lambda)^{\frac 12} + \left(1-\ri\right) \re^{-\ri\frac{7\pi}{24}} (\ri\lambda)^{\frac{7}{12}} + \alpha \re^{-\ri\frac{\pi}{3}} (\ri\lambda)^{\frac 23}
    \nn\\
    &=
    (\ri\lambda)^{\frac 12}
    \left[
    -\ri \re^{\ri\frac{4\pi}{3}} \re^{-\ri\frac{\pi}{4}} + \sqrt 2 \, \re^{-\ri\frac{\pi}{4}}  \re^{-\ri\frac{7\pi}{24}} (\ri\lambda)^{\frac{1}{12}} + \re^{\ri\frac{2\pi}{3}} \re^{-\ri\frac{\pi}{3}} (\ri\lambda)^{\frac 16}
    \right]
    \nn\\
    &=
    (\ri\lambda)^{\frac 12} \re^{\ri\frac{7\pi}{12}}
    \left[
    1 - \sqrt 2 \, \re^{-\ri\frac{\pi}{8}} (\ri\lambda)^{\frac{1}{12}} + \re^{-\ri\frac{\pi}{4}} (\ri\lambda)^{\frac 16}
    \right].\label{ml0}
  \end{align}
  Therefore, $\lambda=0$ is a zero of $\Delta(\lambda)$ of order $\frac 14 + \frac 13 + \frac 12 = \frac{13}{12}$.
  Furthermore, the square bracket on the right side of~\eqref{ml0} is of the form $z^2 - \sqrt2 \, z + 1$ with $z=\re^{-\ri\frac{\pi}{8}} (\ri\lambda)^{\frac{1}{12}}$.
  Thus, it vanishes if and only if
  \[
    \re^{-\ri\frac{\pi}{8}} (\ri\lambda)^{\frac{1}{12}} = \frac{\sqrt 2 \pm \ri \sqrt 2}{2}
    = \frac{1}{\sqrt 2} \left(1\pm \ri\right)
    = \re^{\pm \ri\frac \pi 4}
    \ \Leftrightarrow \
    (\ri\lambda)^{\frac{1}{12}} = \re^{\ri\frac{3\pi}{8}} \text{ or } \re^{-\ri\frac \pi 8}
    \ \Rightarrow \
    \ri\lambda = \ri
    .
  \]
  So the only other \emph{possible} zero of $\Delta(\lambda)$ except for $\lambda=0$ is $\lambda = 1$.
  However,
  \[
    \gamma(1) = \re^{-\ri\frac \pi 8} \re^{\ri\frac{\pi/2}{4}} = 1,
    \qquad
    \rho(1) = \re^{-\ri\frac \pi 6} \re^{\ri\frac{\pi/2}{3}} = 1
  \]
  hence
  \[
    \Delta(1) = \left(1+\ri\right) \left(\alpha-1\right) \left[ -\ri \alpha^2 + \left(1-\ri\right) + \alpha \right] =
    \big(1-\sqrt 3\big) \left(\alpha-1\right) \neq 0,
  \]
  allowing us to conclude that the unique zero of $\Delta(\lambda)$ is $\lambda=0$.
\end{proof}

In order to use~\eqref{g0t-ba} and~\eqref{g1t-ba} in integral representations~\eqref{utm-v} and~\eqref{utm-bih}, we make suitable changes of variable in the terms involving, respectively, $\widetilde g_0(-\ri k^3, t)$ and $\widetilde g_0(-\ri k^4, t), \widetilde g_1(-\ri k^4, t)$.
Specifically, as before, in view of the definition of $\rho(\lambda)$ (see also figure~\ref{fig:rho}), the last term in~\eqref{utm-v} requires the change of variable $k = \alpha \rho(\lambda)$, which maps $\lambda \in \RR$ to $k\in\CAiry$.%
\footnote{
  The changes of variable $k = \rho(\lambda)$ and $k = \alpha^2 \rho(\lambda)$ still imply the desired relation $k^3 = \lambda$ but map $\lambda \in \RR$ to the boundary of the sixth and fourth sextant of the complex $k$ plane, respectively, which do not correspond to the contour $\CAiry$ present in~\eqref{utm-v}.
  As a rule of thumb, these latter transformations are not suitable because they introduce exponential growth via $\re^{\ri kx}$.
}
Then,~\eqref{utm-v} takes the form~\eqref{utm-v2}.

Similarly, in the case of~\eqref{utm-bih}, based on definition~\eqref{eqn:defn.gamma} of $\gamma(\lambda)$ (see also figure~\ref{fig:gamma}), in the penultimate term of~\eqref{utm-bih} we let $k= \ri\gamma(\lambda)$ with $\lambda \in \RR$, since this transformation maps $\RR$ to the relevant contour $\CbiSone$.
In the last term of~\eqref{utm-bih} we let $k= -\gamma(\lambda)$ with $\lambda \in \RR$, since this transformation maps $\RR$ to contour $\CbiStwo$.
In fact, being proactive, we first deform the contours $\CbiSone$ and $\CbiStwo$ to the
contours
\[
  \widetilde{\mathcal C}_{4 \Mspacer 1} = \partial \left\{k\in\mathbb C: \frac{\pi}{4}  < \arg(k) < \frac{\pi}{2}, \; \abs k > \epsilon \right\},
  \quad
  \widetilde{\mathcal C}_{4 \Mspacer 2} = \partial \left\{k\in\mathbb C: \frac{3\pi}{4} < \arg(k) < \pi,           \; \abs k > \epsilon \right\},
\]
Because both
circular arcs of radius $R^4$ (with $R>0$)
are mapped to the semicircle
$\left\{R\epsilon \re^{\ri\theta}: -\pi < \theta < 0\right\}$
with anticlockwise orientation, integral representation~\eqref{utm-bih} becomes
\begin{multline} \label{utm-bih2}
  2\pi u(x, t)
  =
  \int_{\RR} \re^{\ri kx+\ri k^4t} \, \what u_0(k) \D k
  -
  \int_{\CbiSone} \re^{\ri kx+\ri k^4t} \left[\left(1+\ri\right) \what u_0(-\ri k) - \ri \, \what u_0(-k) \right] \D k
  \\
  -
  \int_{\CbiStwo} \re^{\ri kx+\ri k^4t} \left[\left(1-\ri\right) \what u_0(\ri k) + \ri \, \what u_0(-k) \right] \D k
  \\
  +
  \frac{1-\ri}{2}
  \int_{\widetilde{\RR}} \re^{-\gamma(\lambda) x+\ri\lambda t}
  \left[\widetilde g_0(-\ri\lambda, t) + \ri \left(\gamma(\lambda)\right)^{-1} \widetilde g_1(-\ri\lambda, t) \right] \D\lambda
  \\
  +
  \frac{1+\ri}{2}
  \int_{\widetilde{\RR}} \re^{-\ri\gamma(\lambda) x+\ri\lambda t}
  \left[ \widetilde g_0(-\ri\lambda, t) - \left(\gamma(\lambda)\right)^{-1} \widetilde g_1(-\ri\lambda, t) \right] \D\lambda.
\end{multline}
By lemma~\ref{lem:bihS-A.Deltazeros}, the contour $\widetilde{\RR}$ bypasses from below the unique zero of $\gamma(\lambda)$ at the origin.

Inserting expression~\eqref{g0t-ba} into representation~\eqref{utm-v2}, we obtain
\begin{align}
  2\pi v(x, t)
  &=
  \int_{\RR} \re^{\ri kx+\ri k^3 t} \, \what v_0(k) \D k
  +
  \int_{\CAiry} \re^{\ri kx+\ri k^3 t} \left[\alpha \, \what v_0(\alpha k) + \alpha^2 \, \what v_0(\alpha^2 k)\right] \D k
  \nn\\
  &\qquad
  +
  \left(\alpha^2-1\right)
  \int_{\widetilde{\RR}} \re^{\ri\alpha \rho(\lambda) x+\ri\lambda t}
  \frac{\rho(\lambda)}{\Delta(\la)}
  \left[\what u_0(-\ri\gamma(\lambda)) - \what u_0(\gamma(\lambda))\right] \D\lambda
  \nn\\
  &\qquad
  +
  \int_{\widetilde{\RR}} \re^{\ri\alpha \rho(\lambda) x+\ri\lambda t}
  \frac{\gamma(\lambda)}{\Delta(\la)}
  \Big\{\left[\alpha^2 \left(1+\ri\right) \rho(\lambda) - 2\gamma(\lambda)\right] \what v_0(\rho(\lambda))
  \nn\\*
  &\hspace{17em}
  -
  \left[\left(1+\ri\right) \rho(\lambda) - 2\gamma(\lambda)\right]
  \what v_0(\alpha^2 \rho(\lambda))
  \Big\} \D\lambda
  \nn\\
  &\qquad
  -
  \left(\alpha^2-1\right)
  \int_{\widetilde{\RR}} \re^{\ri\alpha \rho(\lambda) x}
  \frac{\rho(\lambda)}{\Delta(\la)} \left[\what u(-\ri\gamma(\lambda), t) - \what u(\gamma(\lambda), t)\right] \D\lambda
  \nn\\
  &\qquad
  -
  \int_{\widetilde{\RR}} \re^{\ri\alpha \rho(\lambda) x}
  \frac{\gamma(\lambda)}{\Delta(\la)}
  \Big\{\left[\alpha^2 \left(1+\ri\right) \rho(\lambda) - 2\gamma(\lambda)\right] \what v(\rho(\lambda), t)
  \nn\\*
  &\hspace{17em}
  -
  \left[\left(1+\ri\right) \rho(\lambda) - 2\gamma(\lambda)\right]
  \what v(\alpha^2 \rho(\lambda), t)
  \Big\} \D\lambda.
  \label{eqn:biSAiry.v.withExtraTerms}
\end{align}
Likewise, inserting the expressions~\eqref{g0t-ba} and~\eqref{g1t-ba} into representation~\eqref{utm-bih2}, we have
\begin{align}
  2\pi u(x, t)
  &=
  \int_{\RR} \re^{\ri kx+\ri k^4t} \, \what u_0(k) \D k
  -
  \int_{\CbiSone} \re^{\ri kx+\ri k^4t} \left[\left(1+i\right) \what u_0(-\ri k) - \ri \, \what u_0(-k) \right] \D k
  \nn\\
  &\quad
  -
  \int_{\CbiStwo} \re^{\ri kx+\ri k^4t} \left[\left(1-\ri\right) \what u_0(\ri k) + \ri \, \what u_0(-k) \right] \D k
  \nn\\
  &\quad
  -
  \frac{1-\ri}{2}
  \int_{\widetilde{\RR}} \re^{-\gamma(\lambda) x+\ri\lambda t}
  \left(\alpha-1\right) \rho(\lambda)
  \left[\frac{\rho(\la)}{\gamma(\la)}-\alpha^2\right]
  \left[\what u_0(\gamma(\lambda)) - \what u_0(-\ri\gamma(\lambda))\right]
  \frac{\D\lambda}{\Delta(\la)}
  \nn\\
  &\quad
  -
  \frac{1-\ri}{2}
  \int_{\widetilde{\RR}} \re^{-\gamma(\lambda) x+\ri\lambda t}
  \bigg\{
  \left[
  \left(1+\ri\right) \rho(\lambda) \left(\gamma(\lambda) + \rho(\lambda)\right) - \left(3-\ri\right) \gamma(\lambda)^2
  \right]
  \what v_0(\alpha^2 \rho(\lambda))
  \nn\\*
  &\hspace*{4cm}
  -
  \left[
  \alpha \left(1+\ri\right) \rho(\lambda) \left(\alpha \gamma(\lambda)+\rho(\lambda)\right) -\left(3-\ri\right)\gamma(\lambda)^2
  \right]
  \what v_0(\rho(\lambda))
  \bigg\}
  \frac{\D\lambda}{\Delta(\la)}
  \nn\\
  &\quad
  -
  \frac{1+\ri}{2}
  \int_{\widetilde{\RR}} \re^{-\ri\gamma(\lambda) x+\ri\lambda t}
  \left(\alpha-1\right) \rho(\lambda)
  \left[\frac{\ri\rho(\la)}{\gamma(\la)}-\alpha^2\right]
  \left[\what u_0(\gamma(\lambda)) - \what u_0(-\ri\gamma(\lambda))\right]
  \frac{\D\lambda}{\Delta(\la)}
  \nn\\
  &\quad
  -
  \frac{1+\ri}{2}
  \int_{\widetilde{\RR}} \re^{-\ri\gamma(\lambda) x+\ri\lambda t}
  \bigg\{
  \left[
  \alpha \left(1-\ri\right) \rho(\lambda) \left(\rho(\lambda)-\ri\alpha \gamma(\lambda)\right) + \left(3+\ri\right)\gamma(\lambda)^2
  \right]
  \what v_0(\rho(\lambda))
  \nn\\*
  &\hspace*{4cm}
  -
  \left[
  \left(1-\ri\right)\rho(\lambda) \left(\rho(\lambda)-\ri\gamma(\lambda)\right)
  +
  \left(3+\ri\right) \gamma(\lambda)^2
  \right]
  \what v_0(\alpha^2 \rho(\lambda))
  \bigg\}
  \frac{\D\lambda}{\Delta(\la)}
  \nn\\
  &\quad
  +
  \frac{1-\ri}{2}
  \int_{\widetilde{\RR}} \re^{-\gamma(\lambda) x}
  \left(\alpha-1\right) \rho(\lambda)
  \left[\frac{\rho(\la)}{\gamma(\la)}-\alpha^2\right]
  \left[ \what u(\gamma(\lambda), t) - \what u(-\ri\gamma(\lambda), t)\right]
  \frac{\D\lambda}{\Delta(\la)}
  \nn\\
  &\quad
  +
  \frac{1-\ri}{2}
  \int_{\widetilde{\RR}} \re^{-\gamma(\lambda) x}
  \bigg\{
  \left[
  \left(1+\ri\right) \rho(\lambda) \left(\gamma(\lambda) + \rho(\lambda)\right) - \left(3-\ri\right) \gamma(\lambda)^2
  \right]
  \what v(\alpha^2 \rho(\lambda), t)
  \nn\\*
  &\hspace*{3.5cm}
  -
  \left[
  \alpha \left(1+\ri\right) \rho(\lambda) \left(\alpha \gamma(\lambda)+\rho(\lambda)\right) -\left(3-\ri\right)\gamma(\lambda)^2
  \right]
  \what v(\rho(\lambda), t)
  \bigg\}
  \frac{\D\lambda}{\Delta(\la)}
  \nn\\
  &\quad
  +
  \frac{1+\ri}{2}
  \int_{\widetilde{\RR}} \re^{-\ri\gamma(\lambda) x}
  \left(\alpha-1\right) \rho(\lambda)
  \left[\frac{\ri\rho(\la)}{\gamma(\la)}-\alpha^2\right]
  \left[ \what u(\gamma(\lambda), t) - \what u(-\ri\gamma(\lambda), t)\right]
  \frac{\D\lambda}{\Delta(\la)}
  \nn\\
  &\quad
  +
  \frac{1+\ri}{2}
  \int_{\widetilde{\RR}} \re^{-\ri\gamma(\lambda) x}
  \bigg\{
  \left[
  \alpha \left(1-\ri\right) \rho(\lambda) \left(\rho(\lambda)-\ri\alpha \gamma(\lambda)\right) + \left(3+\ri\right)\gamma(\lambda)^2
  \right]
  \what v(\rho(\lambda), t)
  \nn\\*
  &\hspace*{3.5cm}
  -
  \left[
  \left(1-\ri\right)\rho(\lambda) \left(\rho(\lambda)-\ri\gamma(\lambda)\right)
  +
  \left(3+\ri\right) \gamma(\lambda)^2
  \right]
  \what v(\alpha^2 \rho(\lambda), t)
  \bigg\}
  \frac{\D\lambda}{\Delta(\la)}.
  \label{eqn:biSAiry.u.withExtraTerms}
\end{align}
It remains to show that the latter two integrals on the right of~\eqref{eqn:biSAiry.v.withExtraTerms} and the final four integrals on the right of~\eqref{eqn:biSAiry.u.withExtraTerms} are equal to zero.

From definition~\eqref{eqn:defn.gamma}, or easily observed from figure~\ref{fig:gamma}, if $\Im(\lambda) \leq 0$ then
\begin{equation} \label{eqn:biSAiry.im-ineq}
  \Im(\gamma(\lambda)) \leq 0,
  \quad
  \Im(\ri\gamma(\lambda)) \geq \frac1{\sqrt2}\abs\la \geq0,
  \quad
  \Im(-\gamma(\lambda)) \geq 0,
  \quad
  \Im(-\ri\gamma(\lambda)) \leq \frac{-1}{\sqrt2}\abs\la \leq0,
\end{equation}
while the latter three of inequalities~\eqref{im-ineq} remain relevant.

\begin{lem} \label{lem:biSAiry.extraTermsVanish}
  All terms on the right of equations~\eqref{eqn:biSAiry.v.withExtraTerms} and~\eqref{eqn:biSAiry.u.withExtraTerms} which contain $\widehat u(\argdot,t)$ or $\widehat v(\argdot,t)$ evaluate to zero.
\end{lem}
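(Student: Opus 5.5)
We follow the strategy of \S\ref{sec:lSAiry}: close each $\wt{\RR}$ integral in the lower half $\la$ plane and show the arc contributions vanish. The first step is to check that every integrand multiplying $\what u(\argdot,t)$ or $\what v(\argdot,t)$ is holomorphic in the region lying below $\wt{\RR}$ and continuous up to it. By lemma~\ref{lem:bihS-A.Deltazeros} and the local deformation of $\wt{\RR}$ below the origin, $\Delta$ has no zeros there. The branch cuts of $\gamma,\rho$ lie on the positive imaginary axis, which is not in $\clos\CC^-$. By \eqref{eqn:biSAiry.im-ineq} and \eqref{im-ineq}, for $\Im\la\le0$ we have $\Im(\gamma)\le0$, $\Im(-\ri\gamma)\le0$, $\Im(\rho)\le0$ and $\Im(\alpha^2\rho)\le0$. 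Hence $\what u(\gamma(\la),t)$, $\what u(-\ri\gamma(\la),t)$, $\what v(\rho(\la),t)$ and $\what v(\alpha^2\rho(\la),t)$ are all holomorphic there, since the half line transforms are holomorphic in $\clos\CC^-$. Cauchy's theorem then reduces each of the six integrals to $\lim_{R\to\infty}\int_{\mathcal C_R^-}$, exactly as in \eqref{u-deform}.

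The second step is to bound the arc integrals. For the kernels, we have $\abs{\re^{\ri\alpha\rho x}}=\re^{-x\Im(\alpha\rho)}$ with $\Im(\alpha\rho(R\re^{\ri\theta}))=R^{1/3}\sin\frac{\theta+2\pi}3$, as in \S\ref{sec:lSAiry}. For $\re^{-\gamma x}$ and $\re^{-\ri\gamma x}$, we compute $\Re\gamma(R\re^{\ri\theta})=R^{1/4}\cos\frac{\theta+\pi/2}{4}\cdot$(phase shift by $-\pi/8$), giving $\Re\gamma = R^{1/4}\cos\frac{\theta}{4}$, and similarly $\Re(\ri\gamma)=-R^{1/4}\sin\frac\theta4\ge0$. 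Both are nonnegative on $-\pi\le\theta\le0$; the first is bounded below by $R^{1/4}\cos\frac\pi4$, and the second vanishes only at $\theta=0$. For the transforms, we have $\abs{\what v(\alpha^j\rho,t)}\le\norm{v(t)}_{\Lebesgue^1}$ and $\abs{\what u(\gamma,t)},\abs{\what u(-\ri\gamma,t)}\le\norm{u(t)}_{\Lebesgue^1}$. Where extra decay is needed, we integrate by parts once, as in \S\ref{sec:lSAiry}, to gain a factor $\abs\gamma^{-1}=R^{-1/4}$ at the cost of $\abs{u(0,t)}+\norm{u_x(t)}_{\Lebesgue^1}$. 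For the algebraic prefactors, we use the factorization \eqref{ml0}: for $\abs\la$ large the bracket is dominated by $\re^{-\ri\pi/4}(\ri\la)^{1/6}$, so $\abs{\Delta(\la)}\gtrsim R^{1/4+1/3+1/2+1/6}=R^{5/4}$ on $\mathcal C_R^-$.

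The third step is the power counting, which I expect to be the main obstacle; each term needs an honest check. In \eqref{eqn:biSAiry.v.withExtraTerms}, the coefficients $\rho/\Delta$ and $\gamma\rho/\Delta,\gamma^2/\Delta$ are $\bigoh{R^{-11/12}}$ and $\bigoh{R^{-1/2}}$ respectively. Combined with $\D\la=\bigoh R\,\D\theta$ and the Jordan type estimate $\int_{-\pi}^0\re^{-xR^{1/3}\sin\frac{\theta+2\pi}3}\D\theta=\bigoh{R^{-1/3}}$, this leaves up to $R^{1/6}$ growth. To recover decay, I would integrate by parts in each half line transform: $\what v(\alpha^j\rho,t)$ is $\bigoh{R^{-1/3}}$ given $v(0,t)$ and $v_x\in\Lebesgue^1$, and the $\what u$ terms are $\bigoh{R^{-1/4}}$. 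If that is still insufficient, I would integrate by parts twice, using the smoothness and decay assumptions. In \eqref{eqn:biSAiry.u.withExtraTerms}, the $\re^{-\gamma x}$ terms decay like $\re^{-cxR^{1/4}}$ uniformly, so they vanish trivially. The $\re^{-\ri\gamma x}$ terms need the estimate $\sin\phi\ge2\phi/\pi$ near $\theta=0$, yielding $\bigoh{R^{-1/4}}$ from the $\theta$ integral. After integrating by parts in the transforms as above, each bound tends to zero as $R\to\infty$ for fixed $x>0$, which proves the lemma.
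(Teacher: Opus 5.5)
Your proposal is correct and is essentially the paper's argument: Cauchy deformation of each $\wt\RR$ integral onto $\mathcal C_R^-$, the lower bound $\abs{\Delta}\gtrsim R^{5/4}$ from \eqref{ml0}, one integration by parts giving $\what u=\bigoh{R^{-1/4}}$ and $\what v=\bigoh{R^{-1/3}}$, and a Jordan-type bound on the exponential kernel. You additionally spell out the $\re^{-\gamma x}$ and $\re^{-\ri\gamma x}$ kernels, which the paper leaves as ``analogous''; there, a single integration by parts already suffices for every term (the worst case, $\rho^2\what v/\Delta$ against $\re^{-\ri\gamma x}$, is $\bigoh{R^{-1/6}}$), so your fallback of integrating by parts twice is never needed.
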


\begin{proof}
  Note first that, provided $\abs\la>2^{12}$
  \begin{align*}
    \abs{\Delta(\la)} &= \sqrt6 \abs{\gamma(\la)\rho(\la)} \abs{ -\ri\alpha^2\gamma(\la)^2 + (1-\ri)\gamma(\la)\rho(\la) + \alpha\rho(\la)^2 } \\
    &\geq \sqrt6 \abs\la^{\frac14+\frac13}\left( \abs\la^{\frac23} - \sqrt2\abs\la^{\frac14+\frac13} - \abs\la^{\frac12} \right) \\
    \geq \abs\la^{\frac54}.
  \end{align*}

  Integrating by parts, we find that
  \[
    \widehat u(-\ri\gamma(\la),t) = \frac{u(0,t)}{\gamma(\la)} + \frac1{\gamma(\la)} \int_0^\infty \re^{-\gamma(\la)y}u_x(y,t)\D y,
  \]
  so
  \begin{align*}
    \abs{\widehat u(-\ri\gamma(\la),t)}
    &\leq \frac1{\abs{\gamma(\la)}} \left( \abs{u(0,t)} + \int_0^\infty \re^{y\Im(-\ri\gamma(\la))} \abs{u_y(y,t)} \D y \right) \\
    &\leq \frac1{\abs{\gamma(\la)}} \left( \normp{u(t)}{\Lebesgue^\infty_x(0,\infty)} + \normp{u_x(t)}{\Lebesgue^1_x(0,\infty)} \right),
  \end{align*}
  where we have used (the weaker version of) the latter of inequalities~\eqref{eqn:biSAiry.im-ineq} to justify the final inequality.
  Similarly,
  \begin{align*}
    \abs{\widehat u(\gamma(\la),t)}
    &\leq
    \frac1{\abs{\gamma(\la)}} \left( \normp{u(t)}{\Lebesgue^\infty_x(0,\infty)} + \normp{u_x(t)}{\Lebesgue^1_x(0,\infty)} \right),
    \\
    \abs{\widehat v(\rho(\la),t)}
    &\leq
    \frac1{\abs{\rho(\la)}} \left( \normp{v(t)}{\Lebesgue^\infty_x(0,\infty)} + \normp{v_x(t)}{\Lebesgue^1_x(0,\infty)} \right),
    \\
    \abs{\widehat v(\alpha^2\rho(\la),t)}
    &\leq
    \frac1{\abs{\rho(\la)}} \left( \normp{v(t)}{\Lebesgue^\infty_x(0,\infty)} + \normp{v_x(t)}{\Lebesgue^1_x(0,\infty)} \right).
  \end{align*}
  The above inequalities may be summarised as, uniformly in $\theta\in[-\pi,0]$,
  \begin{align*}
    \widehat u\left(\gamma\left(R\re^{\ri\theta}\right),t\right),\; \widehat u\left(-\ri\gamma\left(R\re^{\ri\theta}\right),t\right) &= \bigoh{R^{\frac{-1}4}},
    & &\mbox{as } R\to\infty \\
    \widehat v\left(\rho\left(R\re^{\ri\theta}\right),t\right),\; \widehat v\left(\alpha^2\rho\left(R\re^{\ri\theta}\right),t\right) &= \bigoh{R^{\frac{-1}3}},
    & &\mbox{as } R\to\infty.
  \end{align*}

  For $R$ greater than the deformation in $\widetilde\RR$ away from zero, let $\mathcal C_R^-$ be the contour defined by equation~\eqref{eqn:lSAiry.defnCRminus}.
  By Cauchy's theorem applied to the region bounded between $\widetilde\RR$ and $\mathcal C_R^-$,
  \[
    \int_{\widetilde\RR} \re^{\ri\alpha\rho(\la)x} \frac{\rho(\la)}{\Delta(\la)} \widehat u(-\ri\gamma(\la),t) \D\la = \lim_{R\to\infty}\int_{\mathcal C_R^-} \re^{\ri\alpha\rho(\la)x} \frac{\rho(\la)}{\Delta(\la)} \widehat u(-\ri\gamma(\la),t) \D\la,
  \]
  and equivalent contour deformation identities hold for all the other terms of interest.
  Therefore, it is sufficient to show that the limit on the right is zero and equivalent limits for the other terms are each zero.

  For $R>0$ sufficiently large, we use the above inequalities to bound
  \begin{align}
    \notag
    \abs{ \int_{\mathcal C_R^-} \re^{\ri\alpha\rho(\la)x} \frac{\rho(\la)}{\Delta(\la)} \widehat u(-\ri\gamma(\la),t) \D\la }
    &\leq
    \abs{ \int_{-\pi}^0 \re^{\ri\alpha\rho\left(R\re^{\ri\theta}\right)x} \frac{\rho\left(R\re^{\ri\theta}\right)}{\Delta\left(R\re^{\ri\theta}\right)} \widehat u\left(-\ri\gamma\left(R\re^{\ri\theta}\right),t\right) R \D \theta } \\
    \label{eqn:biSAiry.extraTermsVanish.Proof1}
    &\leq
    \int_{-\pi}^0 \abs{\re^{\ri\alpha\rho\left(R\re^{\ri\theta}\right)x}} \abs{\frac{\rho\left(R\re^{\ri\theta}\right)}{\Delta\left(R\re^{\ri\theta}\right)}} \abs{\widehat u\left(-\ri\gamma\left(R\re^{\ri\theta}\right),t\right)} R \D \theta \\
    \notag
    &\leq
    \frac{R^{\frac13}}{R^{\frac54}}\bigoh{R^{\frac{-1}4}} R \int_{-\pi}^0 \re^{-x\Im\left( \alpha\rho\left( R\re^{\ri\theta} \right) \right)} \D\theta \\
    \notag
    &\leq
    \bigoh{R^{\frac{-1}6}} \int_{-\pi}^0 \re^{-xR^{\frac13} \Im\left( \alpha\rho\left( \re^{\ri\theta} \right) \right)} \D\theta.
  \end{align}
  Now, for $\phi = (\pi-\theta)/3$,
  \(
    \Im\left( \alpha\rho\left( \re^{\ri\theta} \right) \right)
    = \sin(\phi),
  \)
  so
  \[
    \int_{-\pi}^0 \re^{-xR^{\frac13} \Im\left( \alpha\rho\left( \re^{\ri\theta} \right) \right)} \D\theta
    =
    3 \int_{\frac\pi3}^{\frac{2\pi}3} \re^{-xR^{\frac13} \sin(\phi)} \D\phi.
  \]
  To obtain a bound on this integral, we use the convexity inequality
  \[
    \frac\pi3 \leq \phi \leq \frac{2\pi}3
    \Rightarrow
    \frac{3\sqrt3}{4\pi} \phi \leq \sin(\phi)
  \]
  to obtain
  \[
    3 \int_{\frac\pi3}^{\frac{2\pi}3} \re^{-xR^{\frac13} \sin(\phi)} \D\phi
    \leq
    3 \int_{\frac\pi3}^{\frac{2\pi}3} \re^{-xR^{\frac13} \frac{3\sqrt3}{4\pi} \phi} \D\phi
    \leq
    R^{\frac{-1}3} \frac{4\pi}{x\sqrt3} \left[ \re^{-xR^{\frac13} \frac{\sqrt3}{2}} - \re^{-xR^{\frac13} \frac{\sqrt3}{4} \phi} \right].
  \]
  Hence, overall,
  \[
    \abs{ \int_{\mathcal C_R^-} \re^{\ri\alpha\rho(\la)x} \frac{\rho(\la)}{\Delta(\la)} \widehat u(-\ri\gamma(\la),t) \D\la }
    = \bigoh{R^{\frac{-1}2}},
  \]
  so
  \[
    \int_{\widetilde\RR} \re^{\ri\alpha\rho(\la)x} \frac{\rho(\la)}{\Delta(\la)} \widehat u(-\ri\gamma(\la),t) \D\la = 0.
  \]

  We could have used the stronger convexity bound $\frac{\sqrt3}2\leq\sin(\phi)$ when evaluating the final integral and obtained faster decay, but analogous bounds are not available for all of the integrals from equation~\eqref{eqn:biSAiry.v.withExtraTerms}, so we have given the weaker but more broadly adaptable version of the argument.
  The above argument can be summarised as the application of the asymptotic bounds
  \begin{align*}
    \int_{-\pi}^0 \abs{\re^{\ri\alpha\rho\left(R\re^{\ri\theta}\right)x}} \D\theta &= \bigoh{R^{\frac{-1}3}}, \\
    \abs{\frac{\rho\left(R\re^{\ri\theta}\right)}{\Delta\left(R\re^{\ri\theta}\right)}} &= \bigoh{R^{\frac13}}\bigoh{R^{\frac{-5}4}},
    & \mbox{ uniformly in } \theta\in[-\pi,0], \\
    \abs{\widehat u\left(-\ri\gamma\left(R\re^{\ri\theta}\right),t\right)} &= \bigoh{R^{\frac{-1}4}},
    & \mbox{ uniformly in } \theta\in[-\pi,0], \\
    R &= \bigoh{R},
  \end{align*}
  to the integral~\eqref{eqn:biSAiry.extraTermsVanish.Proof1}.
  It is straightforward to analyse the other integrals via analogous bounds on their corresponding components.
\end{proof}

With lemma~\ref{lem:biSAiry.extraTermsVanish}, we arrive at the effective solution representation
\begin{multline} \label{eqn:biSAiry.soln.v}
  2\pi v(x, t)
  =
  \int_{\RR} \re^{\ri kx+\ri k^3 t} \, \what v_0(k) \D k
  +
  \int_{\CAiry} \re^{\ri kx+\ri k^3 t} \left[\alpha \, \what v_0(\alpha k) + \alpha^2 \, \what v_0(\alpha^2 k)\right] \D k
  \\
  +
  \left(\alpha^2-1\right)
  \int_{\widetilde{\RR}} \re^{\ri\alpha \rho(\lambda) x+\ri\lambda t}
  \frac{\rho(\lambda)}{\Delta(\la)}
  \left[\what u_0(-\ri\gamma(\lambda)) - \what u_0(\gamma(\lambda))\right] \D\lambda
  \\
  +
  \int_{\widetilde{\RR}} \re^{\ri\alpha \rho(\lambda) x+\ri\lambda t}
  \frac{\gamma(\lambda)}{\Delta(\la)}
  \Big\{\left[\alpha^2 \left(1+\ri\right) \rho(\lambda) - 2\gamma(\lambda)\right] \what v_0(\rho(\lambda))
  -
  \left[\left(1+\ri\right) \rho(\lambda) - 2\gamma(\lambda)\right]
  \what v_0(\alpha^2 \rho(\lambda))
  \Big\} \D\lambda
\end{multline}
and
\begin{align}
  2\pi u(x, t)
  \hspace{-3em} &\hspace{3em} =
  \int_{\RR} \re^{\ri kx+\ri k^4t} \, \what u_0(k) \D k
  -
  \int_{\CbiSone} \re^{\ri kx+\ri k^4t} \left[\left(1+i\right) \what u_0(-\ri k) - \ri \, \what u_0(-k) \right] \D k
  \nn\\
  &\quad
  -
  \int_{\CbiStwo} \re^{\ri kx+\ri k^4t} \left[\left(1-\ri\right) \what u_0(\ri k) + \ri \, \what u_0(-k) \right] \D k
  \nn\\
  &\quad
  -
  \frac{1-\ri}{2}
  \int_{\widetilde{\RR}} \re^{-\gamma(\lambda) x+\ri\lambda t}
  \left(\alpha-1\right) \rho(\lambda)
  \left[\frac{\rho(\la)}{\gamma(\la)}-\alpha^2\right]
  \left[\what u_0(\gamma(\lambda)) - \what u_0(-\ri\gamma(\lambda))\right]
  \frac{\D\lambda}{\Delta(\la)}
  \nn\\
  &\quad
  -
  \frac{1-\ri}{2}
  \int_{\widetilde{\RR}} \re^{-\gamma(\lambda) x+\ri\lambda t}
  \bigg\{
  \left[
  \left(1+\ri\right) \rho(\lambda) \left(\gamma(\lambda) + \rho(\lambda)\right) - \left(3-\ri\right) \gamma(\lambda)^2
  \right]
  \what v_0(\alpha^2 \rho(\lambda))
  \nn\\*
  &\hspace*{8em}
  -
  \left[
  \alpha \left(1+\ri\right) \rho(\lambda) \left(\alpha \gamma(\lambda)+\rho(\lambda)\right) -\left(3-\ri\right)\gamma(\lambda)^2
  \right]
  \what v_0(\rho(\lambda))
  \bigg\}
  \frac{\D\lambda}{\Delta(\la)}
  \nn\\
  &\quad
  -
  \frac{1+\ri}{2}
  \int_{\widetilde{\RR}} \re^{-\ri\gamma(\lambda) x+\ri\lambda t}
  \left(\alpha-1\right) \rho(\lambda)
  \left[\frac{\ri\rho(\la)}{\gamma(\la)}-\alpha^2\right]
  \left[\what u_0(\gamma(\lambda)) - \what u_0(-\ri\gamma(\lambda))\right]
  \frac{\D\lambda}{\Delta(\la)}
  \nn\\
  &\quad
  -
  \frac{1+\ri}{2}
  \int_{\widetilde{\RR}} \re^{-\ri\gamma(\lambda) x+\ri\lambda t}
  \bigg\{
  \left[
  \alpha \left(1-\ri\right) \rho(\lambda) \left(\rho(\lambda)-\ri\alpha \gamma(\lambda)\right) + \left(3+\ri\right)\gamma(\lambda)^2
  \right]
  \what v_0(\rho(\lambda))
  \nn\\*
  &\hspace*{8em}
  -
  \left[
  \left(1-\ri\right)\rho(\lambda) \left(\rho(\lambda)-\ri\gamma(\lambda)\right)
  +
  \left(3+\ri\right) \gamma(\lambda)^2
  \right]
  \what v_0(\alpha^2 \rho(\lambda))
  \bigg\}
  \frac{\D\lambda}{\Delta(\la)}.
  \label{eqn:biSAiry.soln.u}
\end{align}
Proposition~\ref{prop:biS-Airy} follows from equations~(\ref{eqn:biSAiry.soln.v}--\ref{eqn:biSAiry.soln.u}) via change of variables~\eqref{eqn:w-to-u-change-of-variable}.

\section{Numerical evaluation via Filon quadrature} \label{sec:numerics}

Computing the integrals in propositions~\ref{prop:lS-lKdV}--\ref{prop:biS-Airy} presents a numerical challenge due to the highly oscillatory nature of the integrands.
We can make use of a Filon type quadrature method~\cite{DB2008a,Fil1930a}, because we can separate the integrand into a slowly varying function, $f(k)$, multiplied by a potentially highly oscillatory function of the form $\re^{\ri\psi(k)}$ where $\psi(k)$ is the phase function and $\psi'(k)$ is slowly varying.
Specifically, we compute an integral of the form
\[
  I=\int_{a}^{b}f(x)\re^{\ri\psi(x)}\,\D x,
\]
where $f(x)$ and $\psi'(x)$ are slowly varying functions, by splitting the domain of integration at the points $a=x_0,x_1,\ldots,x_N=b$ and approximating the functions $f(x)$ and $\psi(x)$ by piecewise linear functions
\begin{multline*}
  I=\Delta x_0\left(\frac{\ri \re^{\ri \psi_0}}{\psi_{1}-\psi_0}-\frac{\re^{\ri \psi_{1}}-\re^{\ri \psi_0}}{(\psi_{1}-\psi_0)^2}\right)f_0+\sum_{i=1}^{N-1}\left[\Delta x_{i-1}\left(-\frac{\ri \re^{\ri \psi_{i}}}{\psi_{i}-\psi_{i-1}}+\frac{\re^{\ri \psi_{i}}-\re^{\ri \psi_{i-1}}}{(\psi_{i}-\psi_{i-1})^2}\right)\right.\\
  \left.+\Delta x_i\left(\frac{\ri \re^{\ri \psi_i}}{\psi_{i+1}-\psi_i}-\frac{\re^{\ri \psi_{i+1}}-\re^{\ri \psi_i}}{(\psi_{i+1}-\psi_i)^2}\right)\right]f_i+\Delta x_{N-1}\left(-\frac{\ri \re^{\ri \psi_{N}}}{\psi_{N}-\psi_{N-1}}+\frac{\re^{\ri \psi_{N}}-\re^{\ri \psi_{N-1}}}{(\psi_{N}-\psi_{N-1})^2}\right)f_{N}.
\end{multline*}

As the transformed initial datum can be highly oscillatory, we consider each term of each integral separately.
Using the last term of the final integral in the $w$ formula from proposition~\ref{prop:lS-Airy} as an example, we assume that the initial datum is localised about some point, say $x_0$, and, taking inspiration from the Fourier transform on the infinite line, we write $\what v_0(\alpha^2\rho(k)) = V(\alpha^2\rho(k))\re^{-\ri \alpha^2\rho(k) x_0}$ where $V(\alpha^2\rho(k))$ is assumed to be slowly varying.
Thus for our example we have
\[
  f(k) = \frac{V(\alpha^2\rho(k))}{\left(\alpha-1\right)\rho(k) \left[\rho(k) + \alpha^2 \sigma(k)\right]},
  \qquad\text{and}\qquad
  \psi(k) = \sigma(k)x+kt-\alpha^2\rho(k) x_0.
\]

Taking the new form of our integrand, we then parametrise our integration contour by a series of straight line arcs
\[
  I = \left(\int_{-R_\infty}^{-\epsilon}+\int_{-\epsilon}^{-\ri \epsilon}+\int_{-\ri \epsilon}^{\epsilon}+\int_\epsilon^{R_\infty}\right)f(k)\re^{\ri \psi(k)}\,\D k,
\]
where $R_\infty$ is the truncation of infinity and $\epsilon$ is some small number.
For our calculation of this integral, we take $R_\infty=40^3$ and $\epsilon=1$.
We acknowledge that $\psi(k)$ will not be purely real along every arc of this contour, but, provided care is taken when numerically evaluating the weights of integration, there is no issue.

\subsection*{Linear Schr\"odinger and Airy}

The solution of problem~\eqref{eqn:LS},~\eqref{eqn:Airy},~\eqref{eqn:IfC.1} determined in proposition~\ref{prop:lS-Airy} is plotted at time $t=0.39$ in figure~\ref{fig:lS-Airy.w-data} for initial datum supported on the negative (Schr\"odinger) half line, and in figure~\ref{fig:lS-Airy.v-data} for initial datum supported on the positive (Airy) half line.

\begin{figure}
  \centering
  \includegraphics{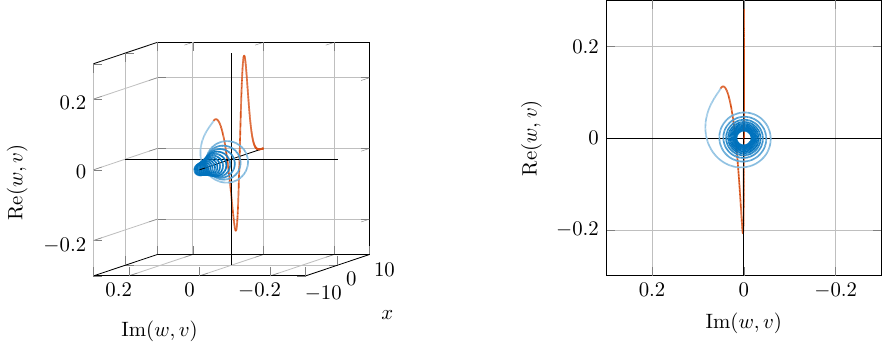}
  \caption{Solution of the linear Schr\"odinger and Airy problem for real initial data supported on the negative (Schr\"odinger) half line, evaluated at $t=0.39$. In blue (faded as $x\to0$) is the complex solution of the linear Schr\"odinger equation and in red is the complex solution of the Airy equation.}
  \label{fig:lS-Airy.w-data}
\end{figure}

\begin{figure}
  \centering
  \includegraphics{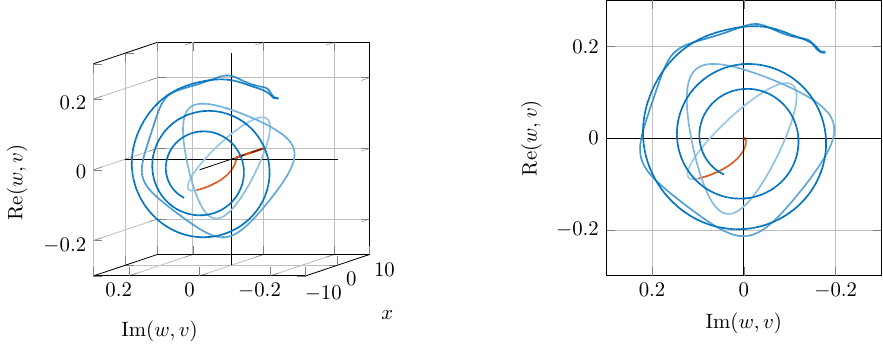}
  \caption{Solution of the linear Schr\"odinger and Airy problem for real initial data supported on the positive (Airy) half line, evaluated at $t=0.39$. In blue is the complex solution of the linear Schr\"odinger equation and in red is the complex solution of the Airy equation.}
  \label{fig:lS-Airy.v-data}
\end{figure}

While the linear Schr\"odinger equation is a bidirectional dispersive equation, the Airy equation is unidirectional and, for our choice of coefficient of $u_{xxx}$, admits waves propogating from larger to smaller positive $x$, towards the interface.
In figure~\ref{fig:lS-Airy.w-data} as the (approximately) real Airy dispersive waves are propogated across the interface, and their governing equation becomes linear Schr\"odinger, they begin to exhibit the expected helical form.
That the solution for $x>0$ is not purely real, despite the real inital datum, is a result of the imposition of the interface condition and the fact that the Airy equation allows some information to travel against the direction of the dispersive waves.
In figure~\ref{fig:lS-Airy.v-data}, as the (approximately) helical dispersive waves of the linear Schr\"odinger equation arrive at the boundary, they are unable to propogate in the Airy regime.
The interface conditions cause the Airy part of the solution to be nonzero, but with no way of carrying the waves into $x>0$, the information can only spread, and slowly so.
This introduces a drag on the linear Schr\"odinger solution for small negative $x$, manifested as the roulette like patterns in the figure.

\subsection*{Other problems}

\begin{figure}
  \centering
  \includegraphics{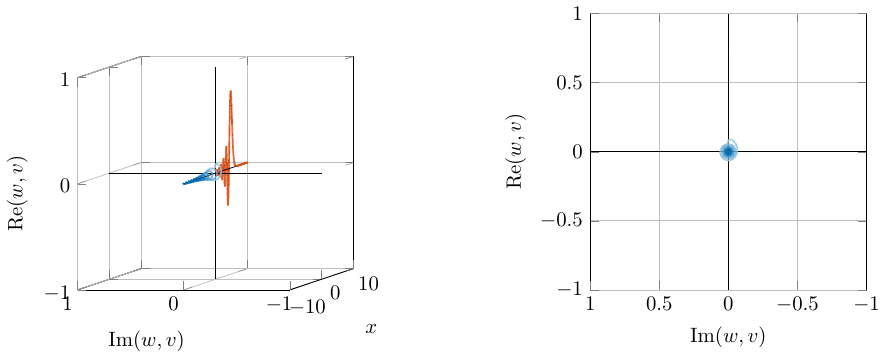}

  \includegraphics{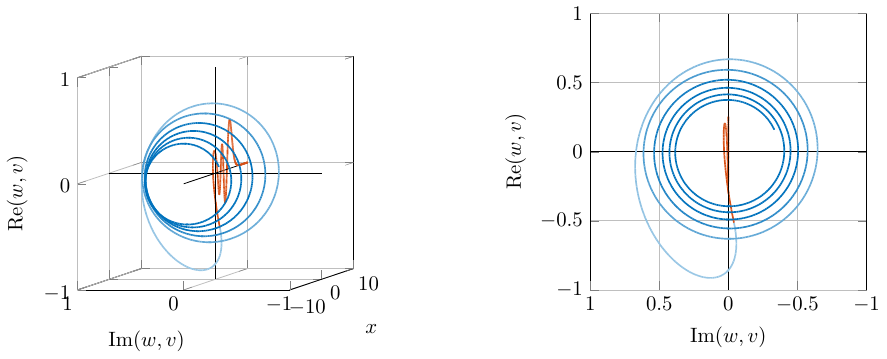}
  \caption{Solution of the biharmonic Schr\"odinger (blue) and Airy (red) problem for real initial data supported on the negative (biharmonic Schr\"odinger) half line, evaluated at $t=0.013$ (top) and $t=0.078$ (bottom).}
  \label{fig:bihS-Airy.w-data}
\end{figure}

\begin{figure}
  \centering
  \includegraphics{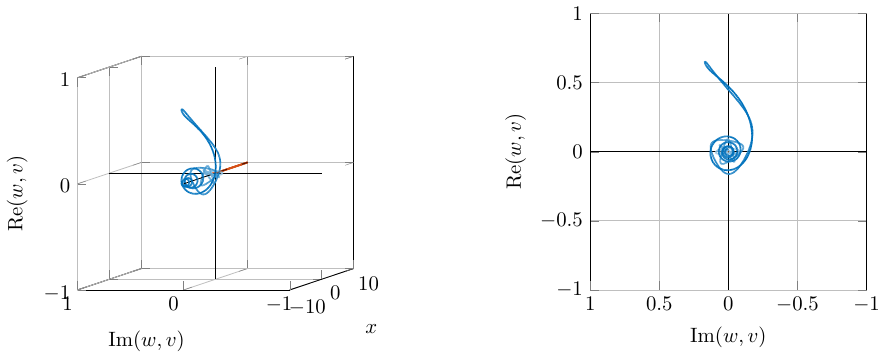}

  \includegraphics{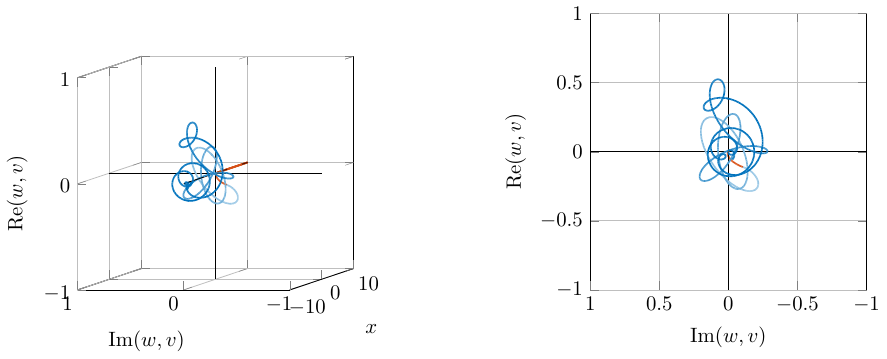}
  \caption{Solution of the biharmonic Schr\"odinger (blue) and Airy (red) problem for real initial data supported on the positive (Airy) half line, evaluated at $t=0.002$ (top) and $t=0.013$ (bottom).}
  \label{fig:bihS-Airy.v-data}
\end{figure}

The solution of the biharmonic Schr\"odinger and Airy problem of proposition~\ref{prop:biS-Airy} is displayed in figure~\ref{fig:bihS-Airy.w-data} for initial datum supported on the positive (Airy) half line, and figure~\ref{fig:bihS-Airy.v-data} for initial datum supported on the negative (biharmonic Schr\"odinger) half line.
These plots exhibit similar features to those of the linear Schr\"odinger and Airy problem, except that the roulette like behaviour appears at much earlier times due to the quartic dispersion relation.

The solution of the heat and Airy problem  of proposition~\ref{prop:heat-Airy} is displayed in figure~\ref{fig:heat-Airy}.
On the right plots, we see that the incoming wave generated by initial datum supported on the right half line disperses on the left half line.
The plots on the left demonstrate exponential decay on the right half line of datum supported initially on the left half line.

\begin{figure}
  \centering
  \includegraphics{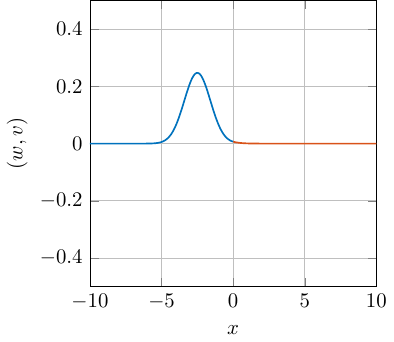} \hspace{2em}
  \includegraphics{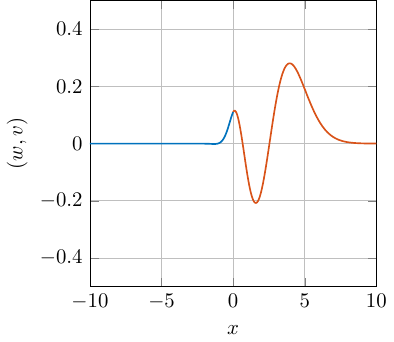}
  \vspace{2ex}

  \includegraphics{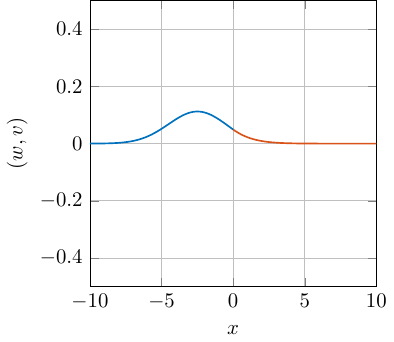} \hspace{2em}
  \includegraphics{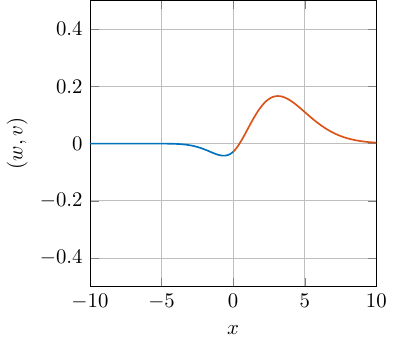}
  \caption{Solution of the heat (blue) and Airy (red) problem for real initial data supported on the negative (heat) half line (left) and positive (Airy) half line (right), evaluated at $t=0.39$ (top) and $t=2$ (bottom).}
  \label{fig:heat-Airy}
\end{figure}

\section*{Acknowledgement}

DM gratefully acknowledges support from the U.S. National Science Foundation (grants NSF-DMS 2206270 and NSF-DMS 2509146) and the Simons Foundation (award SFI-MPS-TSM-00013970).

\bibliographystyle{amsplain}
{\small\bibliography{dbrefs}}

\end{document}